\newcommand\da{\!\downarrow\!}
\newcommand\la{\leftarrow}
\newcommand\id{\mathrm{id}}
\newcommand\ten{\otimes}
\newcommand\hten{\hat{\otimes}}
\newcommand\CC{\mathrm{C}}
\renewcommand\H{\mathrm{H}}
\newcommand\z{\mathrm{Z}}
\newcommand\N{\mathbb{N}}
\newcommand\Z{\mathbb{Z}}
\newcommand\Q{\mathbb{Q}}
\newcommand\R{\mathbb{R}}
\newcommand\Cx{\mathbb{C}}
\newcommand\bK{\mathbb{K}}
\newcommand\bP{\mathbb{P}}
\newcommand\C{\mathcal{C}}
\newcommand\cB{\mathcal{B}}
\newcommand\cD{\mathcal{D}}
\newcommand\cF{\mathcal{F}}
\newcommand\cH{\mathcal{H}}
\newcommand\sA{\mathscr{A}}
\newcommand\sE{\mathscr{E}}
\newcommand\sF{\mathscr{F}}
\newcommand\sH{\mathscr{H}}
\newcommand\sL{\mathscr{L}}
\newcommand\sO{\mathscr{O}}
\newcommand\sS{\mathscr{S}}
\newcommand\sV{\mathscr{V}}
\newcommand\fK{\mathfrak{K}}
\newcommand\g{\mathfrak{g}}
\newcommand\fk{\mathfrak{k}}
\newcommand\fo{\mathfrak{o}}
\newcommand\fu{\mathfrak{u}}
\newcommand\ext{\mathscr{E}\!\mathit{xt}}
\newcommand\Hom{\mathrm{Hom}}
\newcommand\HHom{\underline{\mathrm{Hom}}}
\newcommand\Ext{\mathrm{Ext}}
\newcommand\cone{\mathrm{cone}}
\newcommand\cocone{\mathrm{cocone}}
\newcommand\dif{\mathrm{diff}}
\newcommand\Gal{\mathrm{Gal}}
\newcommand\Ch{\mathrm{Ch}}
\newcommand\Op{\mathrm{Op}}
\newcommand\Spec{\mathrm{Spec}\,}
\newcommand\Lim{\varprojlim}
\newcommand\LLim{\varinjlim}
\newcommand\ho{\mathrm{ho}\!}
\newcommand\into{\hookrightarrow}
\newcommand\xra{\xrightarrow}
\newcommand\xla{\xleftarrow}
\newcommand\bt{\bullet}
\newcommand\by{\times}
\newcommand\Ban{\mathrm{Ban}}
\newcommand\ban{\mathrm{ban}}
\newcommand\Symm{\mathrm{Symm}}
\newcommand\GL{\mathrm{GL}}
\newcommand\gl{\mathfrak{gl}}
\newcommand\Mat{\mathrm{Mat}}
\newcommand\an{\mathrm{an}}
\newcommand\Tot{\mathrm{Tot}\,}
\newcommand\sgn{\mathrm{sgn}}
\newcommand\ev{\mathrm{ev}}
\newcommand\pro{\mathrm{pro}}
\newcommand\pd{\partial}
\newcommand\Sm{\mathrm{Sm}}
\newcommand\FrM{\mathrm{FrM}}
\newcommand\cts{\mathrm{cts}}
\newcommand\Zar{\mathrm{Zar}}
\newcommand\op{\mathrm{opp}}
\newcommand\co{\colon\thinspace}
\newcommand\oR{\mathbf{R}}
\newcommand\oL{\mathbf{L}}
\newcommand\uleft\underleftarrow
\newcommand\uline\underline
\newcommand\uright\underrightarrow
\newtheorem{theorem}{Theorem}[section]
\newtheorem{proposition}[theorem]{Proposition}
\newtheorem{corollary}[theorem]{Corollary}
\newtheorem{lemma}[theorem]{Lemma}
\newtheorem*{theorem*}{Theorem}
\newtheorem*{proposition*}{Proposition}
\newtheorem*{corollary*}{Corollary}
\newtheorem*{lemma*}{Lemma}
\newtheorem*{conjecture*}{Conjecture}
\theoremstyle{definition}
\newtheorem{definition}[theorem]{Definition}
\newtheorem*{definition*}{Definition}
\theoremstyle{remark}
\newtheorem{remark}[theorem]{Remark}
\newtheorem{remarks}[theorem]{Remarks}
\newtheorem*{example*}{Example}
\newtheorem*{examples*}{Examples}
\newtheorem*{remark*}{Remark}
\newtheorem*{remarks*}{Remarks}
\newtheorem*{exercise*}{Exercise}
\newtheorem*{property*}{Property}
\newtheorem*{properties*}{Properties}
\begin{document}

\begin{abstract}
 
We show that real Deligne cohomology of a 
complex manifold $X$ arises locally 
as a 
topological vector 
space completion of the analytic 
Lie groupoid of holomorphic vector bundles. Thus Beilinson's regulator arises naturally as a comparison map between $K$-theory groups of different types.

\end{abstract}

\title{A $K$-theoretic interpretation of real Deligne cohomology}
\author{J.P.Pridham}
\thanks{This work was supported by  the Engineering and Physical Sciences Research Council [grant number EP/I004130/2].}


\maketitle
\section*{Introduction}

The main purpose this paper is to reinterpret Beilinson's regulator as a morphism between $K$-theory groups of different types, and thus to describe real Deligne cohomology as a new form of $K$-theory.

Our motivation comes from considering the $0$-dimensional case.  The $\R$-linearised algebraic $K$-group $K_1(\Cx)_{\R}=\Cx^{\by}\ten_{\Z}\R$ is far larger than the real Deligne cohomology group $\H^1_{\cD}(\Spec \Cx, \R(1))\cong\R$. However,  if we regard $\Cx^{\by}$ as a Lie group,  then its maximal $\R$-linear quotient  is $\Cx^{\by}/S^1 \cong \R$, with the quotient realised by  the Beilinson regulator. Proceeding further, the real Deligne cohomology complex of $\Cx$ can be regarded as the universal $\R$-linear completion of the monoid $\coprod_n B\GL_n(\Cx)$ in Lie groupoids.

To understand the differences between this theory, algebraic and topological $K$-theory, consider the following calculations of continuous group homomorphisms for real vector spaces $V$, where $\delta$ denotes the discrete topology:
\begin{align*}
  \Hom_{\R}(K_1(\Cx)_{\R},V)&\cong\Hom((\Cx^{\by})^{\delta},V)\cong \Hom_{\R}(\Cx^{\by}\ten_{\Z}\R,V)  \\
\Hom_{\R}(K_1^{top}(\Cx)_{\R},V)&\cong \Hom(\pi_0\Cx^{\by},V)\cong 
\Hom_{\R}(0,V)  \\
\Hom_{\R}(\H^1_{\cD}(\Spec \Cx, \R(1)),V)&\cong \Hom(\Cx^{\by},V)\cong 
\Hom_{\R}(\R,V),
\end{align*}
the  isomorphisms on the middle line following because  $\pi_1(B\Cx^{\by})=\pi_0\Cx^{\by}$. 
Thus the idea behind our topological $\R$-linear completion is to look at features of  manifolds and Lie groups which are not encoded by the underlying set of points or by the homotopy type, but by  spaces of smooth functions, or dually by the space of compactly supported distributions. 

Another way to formulate our characterisation above of the real Deligne cohomology complex as a completion  is to say that 
it is 
the universal complex $V$ of real topological vector spaces equipped with a smooth map $K(\Cx) \to V$, by which we mean a suitably coherent system of
maps
\[
 K(\C^{\infty}(Z, \Cx)) \to \C^{\infty}(Z, V)
\]
functorial in  Fr\'echet manifolds $Z$, where $\C^{\infty}(Z,A)$ denotes the ring of smooth $A$-valued functions on $Z$.

For a complex manifold $X$, we therefore consider
the presheaf
\[
 Z \mapsto K(\C^{\infty}(Z, \sO_X^{\an}))
\]
on a category of Fr\'echet manifolds $Z$, for the sheaf $\sO_X^{\an}$ of holomorphic functions on $X$,
and look at maps from this to presheaves
\[
 Z \mapsto \C^{\infty}(Z, \sV)
\]
 for hypersheaves  $\sV$ of complexes of real Banach spaces on  $X$. It turns out that the presheaf $K_{\Ban}(\sO_X^{\an})$ pro-representing these smooth functions in the homotopy category is, up to hypersheafification,  just the real Deligne complex 
\[
 \bigoplus_{p\ge 0} \R_{\cD,X}(p)^{2p-\bt}, 
\]
regarded as a complex of Fr\'echet spaces.

We then have
\[
K_{\Ban}(X):=  \oR\Gamma(X_{\an},K_{\Ban}(\sO_X^{\an})) \simeq \bigoplus_{p \ge 0} \oR\Gamma(X, \R_{\cD}(p)),
\]
and the natural morphisms $K(X) \to K_{\Ban}(\sO_X^{\an})$ induce a morphism $K(X) \to K_{\Ban}(X)$, which is just Beilinson's regulator.

Heuristically, the proof proceeds as follows, imitating many of the steps of Beilinson's comparison between  his and Borel's regulators, as expounded in \cite{BurgosGilReg}.  For technical reasons, we consider $1$-connective $K$-theory $K_{>0}$ instead of $K$.

To see that Beilinson's regulator induces an equivalence in a homotopy category of complexes of pro-Banach spaces, we need to show that it induces isomorphisms
\[
 \ext^i_{\dif}(\bigoplus_{p > 0}\R_{\cD}(p), \sV) \to \ext^i_{\dif}(K_{\Ban,>0}(\sO_X^{\an}),\sV)
\]
on (appropriately defined) differentiable $\Ext$  groups, for all  sheaves $\sV$ of Banach spaces on $X_{\an}$.
Since $K_{>0}(\C^{\infty}(Z,\sO_X^{\an}))$ is locally equivalent to $ B\C^{\infty}(Z,\GL(\sO_X^{\an}))^+$ for all Fr\'echet spaces $Z$,  the sheaf 
\[
 \sH^i_{\dif}(K_{>0}(\sO_X^{\an}),\sV) \subset \sH^i_{\dif}(B\GL(\sO_X^{\an}),\sV)
\]
 should then consist of primitive elements in differentiable cohomology.

The symmetric space $U_n \backslash \GL_n(\sO_{X,x}^{\an})$  is contractible at all points $x \in X$, so Hochschild and Mostow's calculation adapts to give 
\[
 \sH^i_{\dif}(B\GL_n(\sO_X^{\an}),\sV)\cong \sH^i(U_n \backslash \GL_n(\sO_X),\sV)^{\GL_n(\sO_X)}.
\]
 This is just relative Lie algebra cohomology $\sH^i(\gl_n\ten \sO_X^{\an}, \fu_n;\sV)$, and primitive elements as above then correspond to primitive elements of $\sH^i(\gl\ten \sO_X^{\an}, \fu;\sV) $. Thus the pro-Banach completion of $K_{>0}(\sO_X^{\an}) $ is locally equivalent to the complex $P\hat{E}_{\bt}( \gl\ten \sO_X^{\an}, \fu)$ of primitive elements for relative Lie algebra homology.          

We then appeal to Loday and Quillen's calculation, which adapts to show that primitive elements in Lie algebra homology are given by  cyclic homology. This in turn is equivalent to the filtered de Rham complex, so $P\hat{E}_{\bt}( \gl\ten \sO_X^{\an}) \simeq \bigoplus_{p>0}  \Omega^{\bt}_X[2p-1]/F^p$, and combined with the homology of $\fu$, this yields
\[
K_{\Ban,>0}(\sO_X^{\an})\simeq \bigoplus_{p> 0} \cone(\R(p) \to \Omega^{\bt}_X/F^p)^{2p-1-\bt}= \bigoplus_{p> 0} \R_{\cD,X}(p)^{2p-\bt}.
\]

The structure of the paper is as follows.

Section \ref{completionsn} introduces the notion of derived pro-Banach completion $\oL\ban$ of a presheaf on Fr\'echet manifolds, and establishes various foundational results concerning presheaves of pro-Banach complexes. The precise category of Fr\'echet manifolds considered is essentially unimportant --- it just has to have enough test objects to detect all smooth morphisms we encounter.  Proposition \ref{banmodelprop} constructs a model structure on the category of pro-Banach complexes, and  \S \ref{Kbansn} introduces, for Fr\'echet algebras $A$, the $K$-theory presheaf $\fK_{>0}(A)(Z):= K_{>0}(\C^{\infty}(Z,A))_{\R}$     and its pro-Banach completion $K_{\Ban,>0}(A)=\oL\ban \fK_{>0}(A)$. 

The main results of the paper all appear in \S \ref{Kequivsn}, which is concerned with calculating $K_{\Ban,>0}(\sO_X^{\an})$. 
Corollary  \ref{cohoderhamprop} relates differentiable cohomology of $\GL_n(\sO_X^{\an})$ to invariant forms on the symmetric space $U_n \backslash \GL_n(\sO_X^{\an})$, and 
Proposition \ref{banLiecor} applies this to give local strict quasi-isomorphisms between the continuous relative Lie algebra homology complex $\hat{E}_{\bt}^{\R}(\gl_n(\sO_X^{\an}), \fu_n) $ and the pro-Banach completion $\oL\ban \CC_{\bt}(B\GL_n(\sO_X^{\an}),\R)$ of the  complex of  real chains.

Combining these results with the plus construction, Proposition \ref{LieSymmKprop} gives 
$K_{\Ban, >0}(\sO_X^{\an})$ as a direct summand of $\hat{E}_{\bt}^{\R}(\gl(\sO_X^{\an}), \fu)$.
Theorem \ref{mainthm0} strengthens this to a local strict quasi-isomorphism between $K_{\Ban, >0}$ and the cyclic homology complex.
Theorem \ref{mainthm} then uses this to show that  $K_{\Ban,>0}$ and the real Deligne complex are locally strictly quasi-isomorphic. 

Since the functor $\C^{\infty}(Z,-)$ of smooth functions is pro-represented by the space of compactly supported distributions on $Z$, our derived pro-Banach completion functor $\oL\ban$ can be thought of as an enriched left Kan extension of the functor of compactly supported distributions. In particular, this means that
$K_{\Ban,>0}(A)$ can be regarded as the complex of compactly supported distributions on the presheaf $\fK_{>0}(A)$.
A related result has since been established in the sequel \cite{fdK}, which   looks at the non-connective $K$-theory presheaf $U \mapsto \bK(U \by Y)$ on the category of complex affine schemes, for any smooth proper complex scheme $Y$ (and derived and non-commutative generalisations), and shows that the space of compactly supported  distributions on this presheaf  is closely related to real Deligne cohomology.

I would like to heartily thank Ulrich Bunke for identifying many omissions and errors, and to thank the anonymous referee for many helpful suggestions.

\subsection*{Notation}

We work systematically with chain complexes and  homological grading conventions. In particular,  shifts of chain complexes are denoted  $V[n]_i:= V_{n+i}$. 
We always write $\cone(V)$ for the cone of the identity map on $V$.

\tableofcontents

\section{Pro-Banach completion}\label{completionsn}

\subsection{Pro-Banach complexes}

\begin{definition}
 Define $\Ch(\pro(\Ban))$ to be the category of chain complexes $\ldots \xra{d} V_{1} \xra{d} V_0 \xra{d} V_{-1} \xra{d} \ldots $ in the pro-category $\pro(\Ban)$ of the category $\Ban$ of real Banach spaces and continuous linear maps.
\end{definition}

We will silently make use of the embedding $\Ban \to \pro(\Ban)$ sending a Banach space to the constant inverse system of Banach spaces. We will also identify $\pro(\Ban)$ with the subcategory of $\Ch(\pro(\Ban))$ consisting of complexes concentrated in degree $0$. By \cite[Proposition 3.3.4]{prosmans}, the category of Fr\'echet spaces naturally embeds in $\pro(\Ban)$.

\begin{definition}\label{weakproban}
 Say that a morphism $A \to B$ in $\Ch(\pro(\Ban))$ is a 
strict quasi-isomorphism
if its cone $C$ is strictly acyclic, in the sense that the maps $d \co C_i/\z_iC \to \z_{i-1}C$ are isomorphisms in $\pro(\Ban)$.
\end{definition}

\begin{definition}\label{HHomBandef}
Given $U,V \in  \Ch(\pro(\Ban))$, define the chain complex $\HHom_{\Ban}(U,V)$ of real vector spaces by
\[
 \HHom(U,V)_n := \prod_i \Hom_{\pro(\Ban)}(U_i, V_{i+n}),
\]
with differential $df: = d_V \circ f + (-1)^i f \circ d_U$, following  the sign trick for double complexes. 
\end{definition}

\begin{lemma}\label{weakbanlemma}
A morphism $A \to B$ in $\Ch(\pro(\Ban))$ is a 
strict quasi-isomorphism
if and only if $\HHom_{\Ban}(B, \ell^{\infty}(I)) \to \HHom(A, \ell^{\infty}(I))$ is a quasi-isomorphism for all small sets $I$.
\end{lemma}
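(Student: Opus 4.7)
First I would reduce to the equivalent statement that, writing $C$ for the cone of $A\to B$, the complex $\HHom_{\Ban}(C,\ell^{\infty}(I))$ is acyclic for every set $I$ iff $C$ is strictly acyclic; this uses the exact triangle relating $\HHom_{\Ban}(C,V)$, $\HHom_{\Ban}(B,V)$ and $\HHom_{\Ban}(A,V)$, together with Definition \ref{weakproban}. The argument then rests on two facts about $\ell^{\infty}(I)$, both derived from the Hahn--Banach theorem transferred to $\pro(\Ban)$: (a) each $\ell^{\infty}(I)$ is injective in $\pro(\Ban)$ with respect to strict monomorphisms, and (b) the family $\{\ell^{\infty}(I)\}_I$ cogenerates $\pro(\Ban)$, so a morphism $u$ in $\pro(\Ban)$ is an isomorphism iff $\Hom_{\pro(\Ban)}(-,\ell^{\infty}(I))$ is bijective on $u$ for every $I$.

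For the forward direction, strict acyclicity of $C$ supplies strict short exact sequences $0 \to Z_nC \to C_n \xrightarrow{\bar{d}} Z_{n-1}C \to 0$ in $\pro(\Ban)$ at each $n$. A cocycle of degree $m$ in $\HHom_{\Ban}(C,\ell^{\infty}(I))$ is a map $f\co C_m \to \ell^{\infty}(I)$ with $f\circ d=0$, which therefore factors as $\tilde{f}\circ \bar{d}_m$ for a unique $\tilde{f}\co Z_{m-1}C \to \ell^{\infty}(I)$. Exhibiting $f$ as a coboundary amounts to extending $\tilde{f}$ along the strict mono $Z_{m-1}C \hookrightarrow C_{m-1}$, which is possible by (a).

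For the backward direction, suppose $\bar{d}_i\co C_i/Z_iC \to Z_{i-1}C$ fails to be iso for some $i$. By (b) applied to $\bar{d}_i$, there is $V=\ell^{\infty}(I)$ witnessing the failure in one of two ways. \emph{Case A:} some nonzero $\phi\co Z_{i-1}C \to V$ satisfies $\phi\circ \bar{d}_i=0$. Extending $\phi$ to $\tilde{\phi}\co C_{i-1}\to V$ via (a), and using that $d_i$ factors as $C_i\twoheadrightarrow C_i/Z_iC \xrightarrow{\bar{d}_i} Z_{i-1}C \hookrightarrow C_{i-1}$, one finds $\tilde{\phi}\circ d_i = \phi\circ \bar{d}_i=0$, so $\tilde{\phi}$ is a cocycle; any coboundary vanishes on $Z_{i-1}C$, yet $\tilde{\phi}|_{Z_{i-1}C}=\phi\ne 0$, giving a non-trivial class. \emph{Case B:} some $\psi\co C_i/Z_iC \to V$ is not of the form $\phi\circ \bar{d}_i$ for any $\phi\co Z_{i-1}C \to V$. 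Lifting $\psi$ along $C_i\twoheadrightarrow C_i/Z_iC$ to $\tilde{\psi}\co C_i \to V$ yields a cocycle (since $\tilde\psi$ vanishes on $Z_iC\supseteq B_iC$), but any coboundary $g\circ d_i$ would, upon factoring $d_i$ through $\bar{d}_i$ as above, yield $\psi=(g|_{Z_{i-1}C})\circ \bar{d}_i$, contradicting the choice of $\psi$.

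The main technical obstacle is the pro-categorical transfer: verifying that strict monomorphisms in $\pro(\Ban)$ admit Hahn--Banach style extensions into the constant pro-objects $\ell^{\infty}(I)$, and that failure of a morphism to be iso is detected by $\Hom(-,\ell^{\infty}(I))$. Both reduce to level-wise descriptions of strict morphisms in $\pro(\Ban)$ after suitable reindexing, together with the observation that any morphism to a constant pro-object factors through a single Banach level, where classical Hahn--Banach applies.
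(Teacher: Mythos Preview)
Your argument is correct. The paper's own proof is a one-line citation to \cite[Proposition~3.3.3]{prosmans}, which establishes precisely your facts (a) and (b): the spaces $\ell^{\infty}(I)$ form a strictly cogenerating family of injectives in $\pro(\Ban)$. So you have not taken a genuinely different route; rather, you have unpacked the content of the cited result and shown explicitly how it yields the lemma. One small point worth making explicit in your write-up of Case~A: the extension of $\phi$ along $Z_{i-1}C \hookrightarrow C_{i-1}$ via (a) requires this inclusion to be a strict monomorphism, which holds because $Z_{i-1}C$ is by definition a kernel in the quasi-abelian category $\pro(\Ban)$, and kernels are strict monos. With that noted, your proof stands on its own and is more informative than the paper's citation.
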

\begin{proof}
 This follows from  \cite[Proposition 3.3.3]{prosmans}.
\end{proof}

\begin{proposition}\label{banmodelprop}
There is a fibrantly cogenerated model structure on the category of chain complexes of pro-Banach spaces, with cogenerating fibrations $P=\{\cone(\ell^{\infty}(I))[n] \to \ell^{\infty}(I)[n-1]\}_{I,n}$ and  cogenerating trivial fibrations $Q=\{\cone(\ell^{\infty}(I))[n] \to 0\}_{I,n}$. Weak equivalences are strict quasi-isomorphisms, and  cofibrations are levelwise strict monomorphisms.
\end{proposition}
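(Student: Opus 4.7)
The plan is to verify the hypotheses of a dual form of Kan's recognition theorem for cofibrantly generated model categories, applied with $P$ and $Q$ as cogenerating families of fibrations and trivial fibrations, with factorizations constructed by the cosmall object argument: transfinite inverse limits of pullbacks along maps in $P$ and $Q$. Completeness and cocompleteness of $\Ch(\pro(\Ban))$ is inherited from $\pro(\Ban)$. That the class $W$ of strict quasi-isomorphisms is closed under retracts and satisfies two-of-three follows from Lemma \ref{weakbanlemma}, since these properties transfer from ordinary quasi-isomorphisms of chain complexes through the characterization via $\HHom_{\Ban}(-, \ell^{\infty}(I))$.

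Next I would identify $Q$-proj, i.e.\ the putative cofibrations, with levelwise strict monomorphisms. A chain map $A \to \cone(\ell^{\infty}(I))[n]$ is uniquely determined by the induced morphism $A_{n-1} \to \ell^{\infty}(I)$, so the LLP of $f \colon A \to B$ against $Q$ says exactly that every such morphism extends along $f_{n-1}$. Since the $\ell^{\infty}(I)$'s are injective cogenerators of $\pro(\Ban)$ (by Hahn--Banach together with the isometric embedding of any Banach space into a suitable $\ell^{\infty}$), this extension property is equivalent to $f_{n-1}$ being a strict monomorphism. For the containment $Q$-cocell $\subseteq W$, note that each $\cone(\ell^{\infty}(I))[n] \to 0$ is a strict quasi-iso with strictly acyclic fibre, and this property is preserved by pullback and by the transfinite inverse limits appearing in the cosmall construction (using Lemma \ref{weakbanlemma}).

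The remaining and most delicate point is the acyclicity condition $P$-proj $\subseteq W$ together with the convergence of the cosmall object argument itself. For the acyclicity, I would apply $\HHom_{\Ban}(-, \ell^{\infty}(I))$ to a morphism $f$ in $P$-proj: the LLP against $P$ then becomes the statement that $\HHom_{\Ban}(f, \ell^{\infty}(I))$ is a trivial fibration of ordinary chain complexes, hence in particular a quasi-isomorphism, so $f \in W$ by Lemma \ref{weakbanlemma}. The cosmall argument itself is the main technical obstacle: one must verify that at each successor stage the requisite lifting problems are solvable on the nose (which uses the injectivity of $\ell^{\infty}(I)$) and that the transfinite inverse limits preserve both strict quasi-isomorphisms and the relevant lifting properties. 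This is where the exactness properties of $\pro(\Ban)$ established by Prosmans are indispensable.
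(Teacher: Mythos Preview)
Your approach is essentially the paper's: both verify the dual of Hovey's recognition theorem (his Theorem 2.1.19) with the same cogenerating sets, the same identification of $Q$-proj via injectivity of $\ell^\infty(I)$ in $\pro(\Ban)$, and the same characterisation of $P$-proj by applying $\HHom_{\Ban}(-,\ell^\infty(I))$.

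One point of emphasis is misplaced. You flag the ``convergence of the cosmall object argument'' as the main technical obstacle, but this is in fact the trivial step: the codomains of the maps in $P$ and $Q$ are bounded complexes of honest Banach spaces, and a Banach space is automatically cosmall in $\pro(\Ban)$ by the very definition of morphisms in a pro-category (so $\Hom(\Lim_\beta X_\beta, W)\cong \LLim_\beta \Hom(X_\beta,W)$ for $W$ Banach). The paper dispatches this in one line; no appeal to Prosmans' exactness results is needed here. The substantive content lies in the equality $P\text{-proj} = W \cap Q\text{-proj}$. Your own argument actually proves this in full: the LLP against $P$ says exactly that $\HHom_{\Ban}(f,\ell^\infty(I))$ is a trivial fibration for all $I$, and unpacking ``trivial fibration'' as ``surjective quasi-isomorphism'' yields both $f\in Q$-proj (surjectivity) and $f\in W$ (Lemma \ref{weakbanlemma}). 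You only extract the inclusion $P\text{-proj}\subseteq W$, but the full equality is what supplies condition (6) of the recognition theorem, and you should state it. Similarly, you omit the observation that each map in $Q$ is a pullback of a map in $P$, hence a $P$-cocell, which is what gives $Q\text{-cocell}\subseteq P\text{-fib}$ in condition (4).
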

\begin{proof}
We verify the conditions of \cite{Hovey} Theorem 2.1.19.
\begin{enumerate}
 \item   The two-out-of-three property and closure under retracts follow immediately from Lemma \ref{weakbanlemma}.

\item[(2)--(3).] Since the codomains of  $P$ and $Q$ are finite complexes of Banach spaces, they are cosmall in $\Ch(\pro(\Ban))$.

\item[(4).] Every morphism in $Q$ is a pullback of a morphism in $P$, and hence a $P$-cocell, so every $Q$-cocell is a $P$-cocell and hence in $P$-fib. Every pullback of a morphism in $Q$ is a strict quasi-isomorphism, so every  $Q$-cocell is a strict quasi-isomorphism, by Lemma \ref{weakbanlemma}.

\item[(5--6).] 
Consider the classes $P$-proj and $Q$-proj of morphisms with the left lifting property with respect to $P$ and $Q$-respectively. 
Applying   \cite[Proposition 3.3.3]{prosmans}, we see that $Q$-proj consists of levelwise strict monomorphisms. Meanwhile,  $P$-proj consists of morphisms $f\co A \to B$ for which the maps
\[
 \HHom_{\Ban}(B, \ell^{\infty}(I)) \to \HHom_{\Ban}(A, \ell^{\infty}(I))
\]
are surjective quasi-isomorphisms for all $I$. Surjectivity is equivalent to the condition that $f$ lies in $Q$-proj, so by Lemma \ref{weakbanlemma}, $P$-proj consists of the strict quasi-isomorphisms in $Q$-proj.

\end{enumerate}
\end{proof}

\subsection{Completion}

As in 
\cite{KrieglMichor},
it makes sense to talk about smooth functions and smooth differential forms on Fr\'echet  manifolds. 

\begin{definition}
Write $\FrM$ for the category consisting of 
open subsets of separable Fr\'echet spaces, 
with morphisms given by smooth (i.e. $\C^{\infty}$) maps in the sense of \cite[Definition 3.11]{KrieglMichor}.
\end{definition}
Explicitly, a morphism $f \co M \to N$ is said to be smooth if $f \circ c \co \R \to N$ is smooth for all smooth maps $c \co \R \to M$. Note that by \cite[Theorem 4.11]{KrieglMichor}, the locally convex topology on a Fr\'echet space is the same as the $c^{\infty}$-topology, so every object of $\FrM$ is $c^{\infty}$-open in a locally convex space, and the notion of smooth maps given in \cite[Definition 3.11]{KrieglMichor} makes sense for objects of $\FrM$.  

That smooth morphisms are closed under composition follows from the  characterisation 
\[
 \C^{\infty}(M,N) = \Lim_{c \in \C^{\infty}(\R,M)}\C^{\infty}(\R,N)
\]
in \cite[Lemma to Theorem 3.12]{KrieglMichor}. This can also be taken as the definition of the topology on $ \C^{\infty}(M,N)$, where the topology on $\C^{\infty}(\R,N)$ is given by uniform convergence of each  derivative separately on compact subsets (\cite[Definition 3.6]{KrieglMichor}).

By \cite[Theorem 3.12]{KrieglMichor}, an exponential law holds: for any locally convex space $F$, and $M,N \in \FrM$, we have a canonical isomorphism 
\[
 \C^{\infty}(M \by N,F ) \cong \C^{\infty}(M, \C^{\infty}(N,F)). 
\]

\begin{remark}
Observe that $\FrM$ is equivalent to a small category. This follows because the set of isomorphism classes of separable Fr\'echet spaces is small, since each is determined by a countable system of seminorms on a vector space with countable basis.

Instead of $\FrM$, there are many small full subcategories $\C$ of Fr\'echet manifolds which we could use. The category must contain all finite sets, and
applications of Lemma \ref{Csmlemma} require $\C$ to contain all the spaces $\prod_{i=1}^r\GL_{n_i}(\sO_X^{\an}(U))$ for analytic open subsets $U$ of complex manifolds $X$. We also need some form of exponential law for objects of $\C$. 

Since we only use smoothness to generate differential forms and contracting homotopies on symmetric spaces, it is possible that real analytic morphisms will serve equally well, \emph{mutatis mutandis}. 

\end{remark}

\begin{definition}
 Let $\Ch_{\R}$ be the category of  (unbounded) chain complexes in real vector spaces. For any small category $I$, write $\Ch_{\R}(I)$ for the category of presheaves in real chain complexes on $I$ (i.e. functors $I^{\op} \to \Ch_{\R}$).
\end{definition}

\begin{proposition}\label{pshfmodelstr}
 For any small category $I$, there is a  cofibrantly generated model structure on the category  $\Ch_{\R}(I)$, with a morphism $f \co A \to B$ being a fibration (resp. weak equivalence) whenever the maps $f_i \co A(i) \to B(i)$ are surjections (resp. quasi-isomorphisms) for all $i \in I$.
\end{proposition}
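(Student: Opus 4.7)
The plan is to construct this as a standard cofibrantly generated projective (pointwise) model structure, by transfer from Hovey's projective model structure on $\Ch_\R$ itself and an application of \cite{Hovey} Theorem 2.1.19 (dual to the form used in Proposition \ref{banmodelprop}).

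First, I would recall that $\Ch_\R$ carries the projective model structure, with generating cofibrations $I_0=\{S^{n-1}\to D^n\}_n$ (where $D^n$ is $\R$ in degrees $n,n-1$ with identity differential and $S^{n-1}$ is $\R$ in degree $n-1$) and generating trivial cofibrations $J_0=\{0\to D^n\}_n$. For each $i\in I$ write $y(i)\in \Ch_\R(I)$ for the presheaf $j\mapsto \R[\Hom_I(j,i)]$ concentrated in degree $0$; by Yoneda, $\Hom_{\Ch_\R(I)}(y(i)\ten C,F)\cong \Hom_{\Ch_\R}(C,F(i))$ for any $C\in \Ch_\R$. I would then take as generating cofibrations and trivial cofibrations the sets
\[
 I := \{S^{n-1}\ten y(i) \to D^n\ten y(i)\}_{i\in I,\,n\in\Z}, \qquad J := \{0 \to D^n\ten y(i)\}_{i\in I,\,n\in\Z}.
\]
The adjunction above shows immediately that $I$-inj consists of the morphisms that are pointwise surjective quasi-isomorphisms, and $J$-inj consists of the pointwise surjections, which will give the desired description of fibrations and trivial fibrations.

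Next I would verify the conditions of \cite{Hovey} Theorem 2.1.19. The two-out-of-three property and closure of the pointwise quasi-isomorphisms under retracts are immediate because $\Ch_\R$ has them. Smallness of the domains and codomains of $I$ and $J$ follows because each $y(i)\ten C$ with $C$ a bounded finitely generated free complex is $\aleph_0$-small (sequential colimits in $\Ch_\R(I)$ are computed pointwise, and the relevant $\Hom$ in $\Ch_\R$ commutes with sequential colimits of pointwise injections). For condition (4) I need every relative $J$-cell complex to be both in $I$-cof and a weak equivalence; both statements are pointwise, reducing to the corresponding facts in $\Ch_\R$, where pushouts along $0\to D^n$ give split monomorphisms with contractible cokernel and sequential colimits of such are quasi-isomorphisms. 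Finally, the inclusion $J\text{-inj}\supset I\text{-inj}$ follows from the identifications above, since a pointwise surjective quasi-isomorphism is in particular a pointwise surjection.

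The only mild subtlety, and the step I expect to require the most care, is condition (4): checking that relative $J$-cell complexes really are weak equivalences. This is where the argument could break if $I$ had size issues, but since colimits and the verification are computed pointwise in $\Ch_\R$, the standard chain-complex argument (that a transfinite composition of acyclic split monomorphisms is an acyclic monomorphism) transfers directly. Everything else is formal adjunction bookkeeping via Yoneda.
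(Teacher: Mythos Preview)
Your proposal is correct and follows essentially the same approach as the paper: both identify this as the Bousfield--Kan projective model structure with generating (trivial) cofibrations obtained by tensoring the standard generators in $\Ch_{\R}$ with the representables $\R.\Hom_I(-,i)$, and both reduce the verification to the Yoneda adjunction. The paper simply cites \cite{bousfieldkan} and writes down the generators, whereas you spell out the check of Hovey's conditions in more detail; the only minor omission is that you do not state condition (6) explicitly, but it is immediate from your identification of $I$-inj and $J$-inj.
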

\begin{proof}
This is essentially the projective model structure of Bousfield and Kan \cite{bousfieldkan}. 
Generating cofibrations are given by $\R.\Hom_I(-,i)[n] \to \cone(\R.\Hom_I(-,i))[n]$, and generating  trivial cofibrations by $ 0\to \cone(\R.\Hom_I(-,i)) [n]$, where $V.\Hom_I(-,i)$ is the presheaf $j \mapsto \bigoplus_{f\co j \to i}V$. 
\end{proof}

\begin{definition}
 Given Fr\'echet manifolds $Z,T$, write $\C^{\infty}(Z,T)$ for the set of smooth functions $Z \to T$. Given $V = \{V_i\}_i\in \pro(\Ban)$, write $\C^{\infty}(Z,V):= \Lim_i \C^{\infty}(Z, V_i)$, noting that this inherits the structure of a real vector space.
\end{definition}

\begin{proposition}\label{Smprop}
The functor $\Sm \co\Ch(\pro(\Ban)) \to \Ch_{\R}(\FrM) $ given by 
\[
 \Sm(V)(Z)_n:= \C^{\infty}(Z,V_n)
\]
is right Quillen.  
\end{proposition}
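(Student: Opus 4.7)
The plan is to construct a left adjoint $L$ to $\Sm$ and to check it is left Quillen; since the model structure of Proposition~\ref{pshfmodelstr} is cofibrantly generated, it is enough to verify that $L$ sends the generating cofibrations and trivial cofibrations listed there to cofibrations and trivial cofibrations in the model structure of Proposition~\ref{banmodelprop}.

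For existence of $L$, I would note that $\Sm$ preserves all small limits: limits in $\Ch_\R(\FrM)$ are computed pointwise, and for each $Z$ the functor $\C^\infty(Z,-)$ on $\pro(\Ban)$ commutes with products (by the universal property) and equalizers (a smooth map factors through a closed subspace iff it does at the set-theoretic level). Combined with a cogenerating set for $\Ch(\pro(\Ban))$ assembled from the $\ell^\infty(I)[n]$ appearing in Proposition~\ref{banmodelprop}, the adjoint functor theorem then delivers $L$.

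Setting $L_Z := L(\R.\Hom_{\FrM}(-,Z))$ and using that $L$ preserves colimits and shifts, the generating cofibration $\R.\Hom_{\FrM}(-,Z)[n] \to \cone(\R).\Hom_{\FrM}(-,Z)[n]$ maps to the inclusion $L_Z[n] \to \cone(L_Z)[n]$ of a direct summand. This is a levelwise strict monomorphism, hence a cofibration by Proposition~\ref{banmodelprop}. The generating trivial cofibration $0 \to \cone(\R).\Hom_{\FrM}(-,Z)[n]$ maps to $0 \to \cone(L_Z)[n]$: the cone of an identity carries a tautological contracting chain homotopy, making $\cone(L_Z)$ strictly contractible and therefore strictly acyclic in the sense of Definition~\ref{weakproban}, so this map is additionally a strict quasi-isomorphism and hence a trivial cofibration.

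The main obstacle I anticipate is the first step: confirming that the representing object $L_Z \in \pro(\Ban)$, morally a pro-Banach completion of the free real vector space on $Z$ with respect to the seminorms coming from smooth functions on $Z$, genuinely exists and satisfies the solution-set condition needed to land inside $\pro(\Ban)$ rather than some larger category of convenient vector spaces. Once $L$ is in hand, the remaining verifications are formal, relying only on $L$ preserving colimits and on cones of identities being strictly contractible.
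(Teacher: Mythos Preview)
Your verification that the left adjoint $L$ sends generating (trivial) cofibrations to (trivial) cofibrations is correct and is dual to the paper's route: the paper instead checks that $\Sm$ sends the cogenerating (trivial) fibrations of Proposition~\ref{banmodelprop} to (trivial) fibrations in $\Ch_{\R}(\FrM)$, which is immediate since $\Sm$ applied to $\cone(\ell^{\infty}(I))[n] \to \ell^{\infty}(I)[n-1]$ and to $\cone(\ell^{\infty}(I))[n] \to 0$ yields, at each $Z$, the obvious surjection and surjective quasi-isomorphism. Either check is formal once the adjunction is in place; your version has the minor cost of needing to know what $L$ does to the representables, while the paper's version only needs to evaluate $\Sm$ on explicit Banach spaces.

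The genuine gap is the one you flag: the existence of $L$. Your appeal to the adjoint functor theorem is not complete as stated, because the family $\{\ell^{\infty}(I)[n]\}_{I,n}$ ranges over all sets $I$ and is therefore a proper class rather than a cogenerating \emph{set}, and the well-poweredness of $\Ch(\pro(\Ban))$ required for the special adjoint functor theorem is not obvious for pro-categories. The paper avoids this by constructing $L_Z$ directly, exactly along the lines you anticipate in your final paragraph: take the poset $I$ of (equivalence classes of) seminorms $\nu$ on the free vector space $\R.Z^{\delta}$ for which the induced map $Z \to (\R.Z^{\delta})_{\nu}$ is smooth, and set $L_Z = \ban(\R.Z) := \{(\R.Z^{\delta})_{\nu}\}_{\nu \in I}$. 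The identity $\Hom_{\pro(\Ban)}(\ban(\R.Z), V) \cong \C^{\infty}(Z, V)$ for Banach $V$ is then tautological, and one extends to all of $\Ch_{\R}(\FrM)$ by suspension and left Kan extension.
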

\begin{proof}
 First, we need to show that $\Sm$ has a left adjoint. Given $Z \in \FrM$, consider the vector space $\R.Z^{\delta}$ with basis given by the points of $Z$. Define $I$ to be the set of equivalence classes of seminorms $\nu$ on $\R.Z^{\delta}$ for which the induced map $Z \to  (\R.Z^{\delta})_{\nu}$ is smooth, where the subscript $\nu$ denotes completion with respect to $\nu$. 

Then $I$ is a directed set with order relation $[\nu] \le [\nu']$ whenever $\nu \le C\nu'$ for some $C>0$,  upper bounds existing because $[\nu+\nu'']\ge [\nu],[\nu'']$.  We then have a pro-Banach space $\ban(\R.Z):= \{(\R.Z^{\delta})_{\nu}\}_{\nu \in I}$. Clearly, $\Hom_{\pro(\Ban)}( \ban(\R.Z),V) \cong \C^{\infty}(Z,V)$ for all Banach spaces $V$. Extending the functor $\ban$ to the whole of $\Ch_{\R}(\FrM) $ by suspension and left Kan extension then gives a left adjoint to $\Sm$. 

We now need to check that $\Sm$ preserves (trivial) fibrations. It suffices to verify this on the cogenerators, since $\Sm$ preserves all limits. It is automatic that $\cone(\C^{\infty}(Z,\ell^{\infty}(I)))[n] \to \C^{\infty}(Z,\ell^{\infty}(I))[n-1]$ is surjective for all $Z,I,n$, and that  $\cone(\C^{\infty}(Z,\ell^{\infty}(I)))[n] \to 0$ is a surjective quasi-isomorphism.
\end{proof}

\begin{definition}\label{Hsmdef}
 Write $\oL\ban \co \Ch_{\R}(\FrM) \to \Ch(\pro(\Ban))$ for the derived pro-Banach completion functor, given by composing $\ban$ with cofibrant replacement, and write $\oR \Sm$ for the composition of $\Sm$ with fibrant replacement. Given $U \in \Ch_{\R}(\FrM), V \in\Ch(\pro(\Ban))$, define
\[
 \oR\HHom_{\dif}(U,V):= \oR\HHom_{\Ban}( \oL \ban U, V) \simeq   \oR\HHom_{\FrM}(  U, \oR \Sm V), 
\]
and write
\[
 \Ext^i_{\dif}(U,V):= \H^i\oR\HHom_{\dif}(U,V).
\]
\end{definition}

\subsection{Derived $\Hom$-spaces and homotopy ends}

\begin{definition}
 For any small category $J$, the functor $\HHom_J \co \Ch_{\R}(J)^{\op} \by \Ch_{\R}(J) \to \Ch_{\R}$ is given by 
\[
\HHom_J(U,V)_n := \prod_i \Hom_{J}(U_i, V_{i+n}),
\]
with differential $df: = d_V \circ f + (-1)^i f \circ d_U$. For the singleton category $*$, we simply write $\HHom:=\HHom_*$.
\end{definition}

Observe that $\HHom_J$ is right Quillen for the model structure of Proposition \ref{pshfmodelstr}, so has a right-derived functor $\oR\HHom_J$. 

\begin{definition}
Given a small category $I$ and a functor $F\co I\by I^{\op} \to \Ch_{\R}$, define the homotopy end of $F$ by
\[
 \int^h_{i \in I} F(i,i):= \oR\HHom_{ I\by I^{\op} }(\R.\Hom_I, F),
\]
where $\R.\Hom_I$ denotes the presheaf sending $(i,j)$ to the real vector space with basis  $\Hom_I(i,j)$.
\end{definition}

The following lemma, which is almost tautological, allows us to rewrite derived $\Hom$-spaces as homotopy ends, which will considerable simplify their manipulation.
\begin{lemma}
There is a canonical equivalence 
\[
 \oR\HHom_I(U,V) \simeq \int^h_{i \in I} \HHom( U(i),V(i)),
\]
for all $U,V \in \Ch_{\R}(I)$.
\end{lemma}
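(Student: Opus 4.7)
The plan is to reduce the claim to a strict (non-derived) identity and then compute both sides using cofibrant resolutions in the projective model structures of Proposition \ref{pshfmodelstr}.

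At the underived level, viewing $\HHom(U(-), V(-))$ naturally as a presheaf $(i, j) \mapsto \HHom(U(j), V(i))$ on $I \by I^{\op}$, I would first establish the tautological identity
\[
\HHom_I(U, V) \cong \HHom_{I \by I^{\op}}(\R.\Hom_I, \HHom(U(-), V(-))),
\]
which follows from the standard end formula $\Hom_I(A, B) = \int_{i \in I} \Hom_\R(A(i), B(i))$ combined with the reinterpretation of an end over $I$ as a hom against $\R.\Hom_I$ in the bifunctor category. Both sides visibly compute the same family of natural transformations in each chain bidegree, and the Koszul signs in the differentials of the two $\HHom$'s match.

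To derive this identity, note that every object is fibrant in the projective model structures on $\Ch_\R(I)$ and $\Ch_\R(I \by I^{\op})$, so both sides are computed by cofibrant replacement of the first $\HHom$-slot. The standard bar resolution $B_\bullet \to \R.\Hom_I$ in $\Ch_\R(I \by I^{\op})$, with
\[
B_n(i, j) = \bigoplus_{i \to i_1 \to \cdots \to i_n \to j} \R,
\]
is projective-cofibrant, being a direct sum of representables in each simplicial degree. Applying $\HHom_{I \by I^{\op}}(-, \HHom(U(-), V(-)))$ termwise and invoking the Yoneda identification on each representable summand identifies the totalised result with $\HHom_I(B'_\bullet U, V)$ for the parallel bar resolution $B'_\bullet U \to U$ in $\Ch_\R(I)$. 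Since $B'_\bullet U$ is itself a projective-cofibrant replacement of $U$, this expression computes $\oR\HHom_I(U, V)$, giving the claimed equivalence.

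The substantive work is the combinatorial identification of the two bar constructions and the verification that the Yoneda identifications are compatible with the bar differentials and the Koszul signs in $\HHom$; this is a routine bookkeeping exercise once the underived end formula is in place. All the model-theoretic inputs (every object fibrant, bar resolutions cofibrant, Yoneda at the level of presheaves) are automatic from the projective structure, so no serious obstacle is expected.
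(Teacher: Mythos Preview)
Your argument is correct, and it reaches the same destination as the paper via a different route. Both proofs begin with the underived end identity
\[
 \HHom_I(U,V) \cong \HHom_{I\times I^{\op}}(\R.\Hom_I, \HHom(U(-),V(-))).
\]
From there the paper argues abstractly: it observes that $(U,V)\mapsto \oR\HHom_{I\times I^{\op}}(\R.\Hom_I, \HHom(U,V))$ already preserves weak equivalences in $(U,V)$ (using that $\HHom(-,-)$ does so over a field), and hence, by the characterisation of $\oR F$ as the universal weak-equivalence-preserving functor under $F$, must coincide with the right-derived functor of the underived expression, namely $\oR\HHom_I(U,V)$. No explicit resolutions are chosen.

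You instead compute both sides with matching bar resolutions and identify them term by term via Yoneda. This buys you a concrete chain-level quasi-isomorphism and makes the combinatorics explicit, at the cost of the bookkeeping you flag. The paper's argument is shorter and resolution-free, but leans on the universal property of derived functors and the (tacit) fact that $(U,V)\mapsto \HHom(U(-),V(-))$ preserves objectwise quasi-isomorphisms. Either approach is entirely adequate here.
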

\begin{proof}
The right-derived functor $\oR F$ of a functor $F$ can be characterised as the universal functor under $F$ preserving weak equivalences.
 By definition, we have
\[
  \int^h_{i \in I} \HHom( U(i),V(i))= \oR\HHom_{ I\by I^{\op} }(\R.\Hom_I, \HHom(U,V)),
 \]
which is the evaluation at $(U,V)$ of the right-derived functor of the functor 
\[
 \HHom_{ I\by I^{\op} }(\R.\Hom_I, \HHom(-,-))
\]
on $\Ch_{\R}(I)^{\op} \by \Ch_{\R}(I)$.

On the other hand,
\[
 \HHom_{ I\by I^{\op} }(\R.\Hom_I, \HHom(U,V))= \HHom_I(U,V),
\]
with corresponding right-derived functor $(U,V) \mapsto \oR\HHom_I(U,V) $. 
\end{proof}

\begin{definition}
 Given a small category $I$, define $\Ch(\pro(\Ban),I)$ to be the category of functors $I^{\op} \to \Ch(\pro(\Ban))$.
\end{definition}

We will not attempt to put  a model structure on this category; since the model structure of Proposition \ref{banmodelprop} is only fibrantly cogenerated, we cannot take a suitable model structure off the shelf. 
The category $\Ch(\pro(\Ban),I)$ has an obvious notion of levelwise strict quasi-isomorphism, so we have a relative category, and hence notions of right-derived functors and an $\infty$-category given by localisation, but we will save space by making the following a definition.

\begin{definition}\label{RHomBan}
Define  $\oR\HHom_{I,\Ban}\co \Ch(\pro(\Ban),I)^{\op} \by \Ch(\pro(\Ban),I) \to \Ch_{\R}$ by
\[
 \oR\HHom_{I,\Ban}(U,V):= \int^h_{i \in I} \oR\HHom_{\Ban}(U(i),V(i)).
\]
Here, $\oR\HHom_{\Ban} $ is the right derived functor of the right Quillen functor $\HHom_{\Ban}$ from Definition \ref{HHomBandef} with respect to the model structure of Proposition \ref{banmodelprop}. Explicitly, $ \oR\HHom_{\Ban}(U(i),V(i))\simeq \HHom_{\Ban}(U(i),V(i)')$ for a fibrant replacement $V(i) \to V(i)'$.

For the category $\Op(X_{\an})$ of open subsets of a complex manifold $X_{\an}$, we just denote $\oR\HHom_{\Op(X_{\an}),\Ban}$ by $\oR\HHom_{X_{\an},\pro(\Ban) }$.
\end{definition}

\subsection{Pro-Banach hypersheaves and $K_{\Ban,>0}$}\label{Kbansn}

\begin{definition}
Given a  spectrum $Y=\{Y^n\}$ in simplicial sets in the sense of \cite[Definition 2.1]{BousfieldFriedlander}  and a $\Q$-algebra $A$, write $Y_A$ for the chain complex
\[
 Y_A:=  \LLim_n N \bar{\CC}_{\bt}(Y^n,A)[n]
\]
of $A$-modules, where $\bar{\CC}$ denotes reduced 
chains on a pointed simplicial set, and $N$ the Dold--Kan normalisation;  the maps
\[
 N\bar{\CC}_{\bt}(Y^n,A)\to N\bar{\CC}_{\bt}(Y^{n+1},A)[1]
\]
combine the structure map 
\[
 (N^{-1}A[-1])\ten_A \bar{\CC}_{\bt}(Y^n,A)\cong \bar{\CC}_{\bt}(S^1 \wedge Y^n,A)  \to\bar{\CC}_{\bt}(Y^{n+1},A)
\]
with the Eilenberg--Zilber shuffle map
\[
 A[-1]\ten_A N\bar{\CC}_{\bt}(Y^n,A) \to N( (N^{-1}A[-1])\ten_A \bar{\CC}_{\bt}(Y^n,A)). 
\]
\end{definition}

Since the rational stable Hurewicz map $\pi_*(Y)\ten \Q \to \H_*(Y,\Q)$ is an isomorphism, 
this construction satisfies $\H_i(Y_A) =\pi_i(Y)\ten_{\Z}A$, in particular sending stable equivalences to quasi-isomorphisms. It is equivalent to smashing with the Eilenberg--MacLane spectrum $HA$. The  right adjoint to the functor $Y \mapsto Y_A$ sends a chain complex $V$ to the spectrum $\{N^{-1}\tau_{\ge 0}(V[-n])\}$, where $\tau$ denotes good truncation.

\begin{definition}
Given a Fr\'echet algebra $A$,  define    $\fK_{>0}(A)\in \Ch_{\R}(\FrM)$  by $Z \mapsto K_{>0}(\C^{\infty}(Z, A))_{\R}$. Varying $U$ then yields a   $\Ch_{\R}(\FrM)$-valued presheaf $\fK_{>0}(\sO_X^{\an})$ on any complex manifold $X_{\an}$, given by
  \[
   U \mapsto \fK_{>0}(\sO_X^{\an}(U)).
  \]
 \end{definition}

\begin{definition}\label{ChBanXdef}
 Define $\Ch(\pro(\Ban),X_{\an})$ to be the category of presheaves  of  complexes of pro-Banach spaces on $X_{\an}$.
\end{definition}

\begin{definition}\label{KBandef}
Given a  Fr\'echet algebra $A$, we define $ K_{\Ban,>0}(A)\in \Ch(\pro(\Ban))$ by
\[
 K_{\Ban,>0}(A):=\oL\ban\fK_{>0}(A).
\]
This yields a  pro-Banach presheaf $K_{\Ban,>0}(\sO_X^{\an})\in \Ch(\pro(\Ban),X_{\an})$ given  by
\[
 K_{\Ban,>0}(\sO_X^{\an})(U)=K_{\Ban,>0}(\sO_X^{\an}(U)).
\]
 \end{definition}

The remainder of the paper will be dedicated to showing that  $K_{\Ban,>0}(\sO_X^{\an})$ is locally strictly quasi-isomorphic to the real Deligne complex.

\begin{remark}
 The description of smooth morphisms between Fr\'echet manifolds in \cite[Lemma to Theorem 3.12]{KrieglMichor} gives an embedding in the category of acts for the monoid $\C^{\infty}(\R,\R)$, by sending a manifold $M$ to the $\C^{\infty}(\R,\R)$-act $\C^{\infty}(\R,M) $. Thus  $ K_{\Ban,>0}(A)$ is an invariant of the abstract ring $\C^{\infty}(\R,A)$ equipped with its $\C^{\infty}(\R,\R)$-action.
\end{remark}

\begin{definition}\label{Banhypshf}
A presheaf $\sV\in \Ch(\pro(\Ban),X_{\an})$ is said to be a hypersheaf if for every open hypercover $\tilde{U}_{\bt} \to U$, the map
\[
 \sV(U) \to \ho\Lim_{n \in \Delta} \sV(\tilde{U}_n)
\]
to the homotopy limit  
is a strict quasi-isomorphism.
\end{definition}

\begin{definition}\label{localstrictdef}
 A morphism $f\co \sE \to \sF$ in $\Ch(\pro(\Ban),X_{\an}) $ is said to be a local strict quasi-isomorphism if for all hypersheaves $\sV$ in $\Ch(\pro(\Ban),X_{\an}) $, the map
\[
 \oR\HHom_{X_{\an}, \Ban}(\sF, \sV) \to  \oR\HHom_{X_{\an}, \Ban}(\sE, \sV)
\]
is a quasi-isomorphism, for $\oR\HHom_{X_{\an}}$ as in Definition \ref{RHomBan}.
\end{definition}

\subsection{Descent and polydiscs}\label{polydiscssn}

\begin{definition}\label{BXandef}
 Define $\cB(X_{\an})$ to be the category whose objects  are those open subsets of $X_{\an}$ which are isomorphic to open polydiscs, i.e.
\[
 \{ z \in \Cx^d\,:\, |z_i|<1 \quad \forall i\},
\]
and whose morphisms are inclusions.
\end{definition}

We now check that polydiscs determine everything we will need to consider.
\begin{lemma}\label{bifunctordescent}
Given a bifunctor $\sF\co \Op(X_{\an})^{\op}\by \Op(X_{\an})\to \Ch_{\R}$ such that $\sF(-,U) \in \Ch_{\R}(X_{\an})$ is a hypersheaf for all $U$, the map  
\[
 \int^h_{U \in \Op(X_{\an})} \sF(U,U) \to \int^h_{U \in \cB(X_{\an})}\sF(U,U)
\]
of homotopy ends is a quasi-isomorphism.
\end{lemma}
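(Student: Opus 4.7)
My approach is to exploit the adjunction for restriction along $i\co \cB(X_{\an}) \hookrightarrow \Op(X_{\an})$ together with basis-comparison for hypersheaves.

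Unwinding the definition of the homotopy end, the adjunction $\oL(i\times i^{\op})_! \dashv (i\times i^{\op})^*$ for presheaves of chain complexes on $\Op\times\Op^{\op}$ rewrites the right-hand side as
\[
\int^h_{U\in\cB}\sF(U,U) \;\simeq\; \oR\HHom_{\Op\times\Op^{\op}}\!\bigl(\oL(i\times i^{\op})_!\R.\Hom_{\cB},\;\sF\bigr),
\]
so the comparison map in the lemma is induced by the counit $\varepsilon\co \oL(i\times i^{\op})_!\R.\Hom_{\cB}\to\R.\Hom_{\Op}$. It therefore suffices to show that $\oR\HHom(\varepsilon,\sF)$ is a quasi-isomorphism for every $\sF$ with the property that $\sF(-,V)$ is a hypersheaf for all $V$.

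Using the cosimplicial Bousfield--Kan model for the homotopy end, I expand $\oR\HHom(\varepsilon,\sF)$ as a map of totalizations of cosimplicial complexes whose terms are products of $\sF(U_0,U_n)$ indexed by chains $U_0\subseteq U_1\subseteq\cdots\subseteq U_n$ ranging in $\Op$ versus in $\cB$. Refinement from $\Op$-chains to $\cB$-chains is realized by choosing, for each $U_i$ in a chain, an open cover by polydiscs (possible because polydiscs form a basis) and then passing to the \v{C}ech nerve. Since $\sF(-,U_n)$ is a hypersheaf, this refinement induces a quasi-isomorphism after applying $\sF$ in the first slot, and the same holds at each cosimplicial level. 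Assembling the effects across cosimplicial levels then shows that $\oR\HHom(\varepsilon,\sF)$ is a quasi-isomorphism.

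The main obstacle is the second variable $V=U_n$ of $\sF$, for which no hypersheaf condition is assumed. The key observation is that the twisted-arrow structure implicit in the end forces the first and last entries to be linked by a chain of inclusions $U_0\subseteq\cdots\subseteq U_n$; thus refining the intermediate and initial slots by polydiscs contained in $U_n$ suffices to capture all information, and cofilteredness of the category of polydiscs inside any open subset ensures the refinement category is homotopically trivial. Making this precise requires a careful analysis of the bicosimplicial refinement and its interaction with the hypersheaf descent, which constitutes the technical heart of the argument.
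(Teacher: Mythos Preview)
Your high-level strategy---rewrite the $\cB$-end via the adjunction $\oL(i\times i^{\op})_!\dashv (i\times i^{\op})^*$ and then show $\oR\HHom(\varepsilon,\sF)$ is a quasi-isomorphism using the hypersheaf hypothesis on $\sF(-,V)$---is exactly the paper's. The gap is in the execution that follows.

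The claim that the comparison ``holds at each cosimplicial level'' is not correct. At level $n$, the map in question is the projection
\[
\prod_{U_0\subseteq\cdots\subseteq U_n\text{ in }\Op}\sF(U_0,U_n)\;\longrightarrow\;\prod_{U_0\subseteq\cdots\subseteq U_n\text{ in }\cB}\sF(U_0,U_n),
\]
and this is not a quasi-isomorphism for any fixed $n$. Replacing $U_0$ by the \v{C}ech nerve of a polydisc cover and invoking the hypersheaf property of $\sF(-,U_n)$ gives $\sF(U_0,U_n)\simeq\holim_\alpha\sF(B_\alpha,U_n)$, but this does not convert the $\Op$-indexed product into the $\cB$-indexed one: the intermediate $U_1,\ldots,U_{n-1}$ and the terminal $U_n$ still range over $\Op$, and your appeal to cofilteredness of polydiscs inside $U_n$ does not settle this. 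The phrase ``assembling the effects across cosimplicial levels'' is where the actual work lies, and it is not carried out.

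The clean fix, which is what the paper does, is to avoid any level-wise comparison and instead analyse the counit in the first variable alone. Take the bar resolution $\tilde h(U,V)=B(U\!\downarrow\!\cB(X_{\an})\!\downarrow\!V)$ of $h_{\cB}$, so that $((i,i)_!N\R.\tilde h)(U,V)=N\R.B(U\!\downarrow\!\cB(X_{\an})\!\downarrow\!V)$ for all $U,V\in\Op(X_{\an})$. The key point is that for $U\in\cB(X_{\an})$ with $U\subseteq V$, the category $U\!\downarrow\!\cB(X_{\an})\!\downarrow\!V$ has an initial object (namely $U$ itself), so its nerve is contractible; hence $\phi_V\co(i,i)_!N\R.\tilde h(-,V)\to\R.h_{\Op}(-,V)$ is a quasi-isomorphism on every polydisc, i.e.\ a local quasi-isomorphism. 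Since $\sF(-,V)$ is a hypersheaf, $\oR\HHom_{\Op}(\phi_V,\sF(-,V))$ is a quasi-isomorphism for each $V$, and taking the homotopy end over $V$ finishes. Your last paragraph gestures toward a ``homotopically trivial refinement category'', but the precise statement needed is this contractibility for $U$ \emph{already a polydisc}, applied variable-by-variable rather than level-by-level.
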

\begin{proof}
Writing $h_{\cB}$ for the set-valued presheaf $\Hom_{\cB(X_{\an})}(-,-)$ on $\cB(X_{\an}) \by \cB(X_{\an})^{\op}$ and similarly for $h_{\Op}$, we begin by observing that a cofibrant replacement for $h_{\cB}$ as a simplicial set-valued presheaf is given by 
\[
 \tilde{h}(U,V):=  B(U \da \cB(X_{\an})\da V),
\]
the nerve of  the  category  $(U \da \cB(X_{\an}) \da V)$  of objects under $U$ and over $V$,  consisting of diagrams $U \to W \to V$ in $\cB(X_{\an})$. This follows because the category $(U \da \cB(X_{\an}) \da V)$ is either empty (when $U \nsubseteq V$) or has final object $U \to V \to V$, so $ \tilde{h}(U,V)$ is either empty or contractible and $\tilde{h}(U,V) \to h_{\cB}(U,V)$ is a weak equivalence. That the presheaf $\tilde{h}$ is cofibrant follows because
\[
 \Hom_{(\cB(X_{\an})^{\op} \by \cB(X_{\an})) }(\tilde{h}_n, \sF) \cong \prod_{\substack{W_0 \to \ldots \to W_n\\ \in B_n\cB(X_{\an}) }} \cF(W_0,W_n).
\]
Thus the normalised $\R$-linearisation $N\R.\tilde{h}(-,-)= N\CC_{\bt}( \tilde{h}(-,-), \R) $ is  a cofibrant replacement for $\R.h_{\cB}(-,-)$ in $\Ch_{\R}(\cB(X_{\an})^{\op} \by \cB(X_{\an}))$.

Writing $i \co \cB(X_{\an})\to \Op(X_{\an})$ for the inclusion functor, 
we then have
\begin{eqnarray*}
 \int^h_{U \in \cB(X_{\an})}\sF(U,U)&=& \oR\HHom_{\cB(X_{\an})^{\op}\by \cB(X_{\an}) }(\R.h_{\cB}, (i,i)^*\sF)\\
&\simeq& \HHom_{\cB(X_{\an})^{\op}\by \cB(X_{\an}) }(N\R.\tilde{h}, (i,i)^*\sF)
\end{eqnarray*}

Now, the functor $(i,i)^*$ is right Quillen, with a left adjoint $(i,i)_!$ determined by
\begin{align*}
 (i,i)_!&\R(\Hom_{\cB(X_{\an})}(-,A)\by \Hom_{\cB(X_{\an})}(B,-)) \\
&= \R(\Hom_{\Op(X_{\an})}(-,A)\by \Hom_{\Op(X_{\an})}(B,-))
\end{align*}
and left Kan extension. By adjunction, we then have
\[
 \int^h_{U \in \cB(X_{\an})}\sF(U,U) \simeq \HHom_{\cB(X_{\an})^{\op}\by \cB(X_{\an}) }( (i,i)_!N\R.\tilde{h}, \sF).
\]

Next, observe that $((i,i)_!N\R.\tilde{h})(U,V) =  N\R.B(U \da \cB(X_{\an})\da V)$, for $U,V$ now in $\Op(X_{\an})$, rather than just in $\cB(X_{\an})$. There is a canonical map 
\[
 (i,i)_!N\R.\tilde{h} \to \R.h_{\Op},
\]
and we now wish to say that for fixed $V \in \Op(X_{\an})$, the map
\[
\phi_V\co  (i,i)_!N\R.\tilde{h}(-,V) \to\R.h_{\Op}(-,V)
\]
is a local quasi-isomorphism.
This follows from the observation above that for all $U \in \cB(X_{\an})$, the map $\tilde{h}(U,iV)\to \Hom_{\cB(X_{\an})}(U,V)$ is a weak equivalence, $i$ being full and faithful.


Writing
\[
 \oR\HHom_{\Op(X_{\an})^{\op}\by \Op(X_{\an}) }( h_{\Op}, \sF)\simeq \int^h_{V \in \Op(X_{\an})} \oR\HHom_{\Op(X_{\an})}(h(-,V), \sF(-,V)),
\]
and using that $\sF(-,V)$ is a hypersheaf, we then get
\begin{eqnarray*}
 \int^h_{U \in \Op(X_{\an})} \sF(U,U) &\simeq& \int^h_{V \in \Op(X_{\an})} \oR\HHom_{\Op(X_{\an})}(\R.h(-,V), \sF(-,V))\\
&\simeq& \int^h_{V \in \Op(X_{\an})} \oR\HHom_{\Op(X_{\an})}( (i,i)_!N\R.\tilde{h}(-,V), \sF(-,V))\\
&\simeq& \oR\HHom_{\Op(X_{\an})^{\op}\by \Op(X_{\an}) }( \oL(i,i)_!\R.h_{\cB}, \sF)\\
&\simeq& \oR\HHom_{\cB(X_{\an})^{\op}\by \cB(X_{\an}) }(\R.h_{\cB}, (i,i)^*\sF)\\
&\simeq& \int^h_{U \in \cB(X_{\an})} \sF(U,U).
\end{eqnarray*}

\end{proof}


The following lemma gives a statement for strict quasi-isomorphisms whose analogue for quasi-isomorphisms is well-known.

\begin{lemma}\label{localstrictlemma}
If $f \co\sE \to \sF$ is a  morphism in $\Ch(\pro(\Ban),X_{\an})$ for which the maps $\sE(U) \to \sF(U)$ are strict quasi-isomorphisms for all open polydiscs $U \subset X_{\an}$, then $f$ is a local strict quasi-isomorphism in the sense of Definition \ref{localstrictdef}.
\end{lemma}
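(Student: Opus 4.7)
The plan is to unfold the definitions so that everything reduces to Lemma \ref{bifunctordescent} combined with the hypothesis. By Definition \ref{RHomBan}, we need to show that for every hypersheaf $\sV \in \Ch(\pro(\Ban),X_{\an})$, the map
\[
 \int^h_{U \in \Op(X_{\an})} \oR\HHom_{\Ban}(\sF(U), \sV(U)) \lra \int^h_{U \in \Op(X_{\an})} \oR\HHom_{\Ban}(\sE(U), \sV(U))
\]
is a quasi-isomorphism. I would introduce the bifunctors $G_{\sE},G_{\sF}\co \Op(X_{\an})^{\op}\by \Op(X_{\an})\to \Ch_{\R}$ given by
\[
 G_{\sE}(V,U):= \oR\HHom_{\Ban}(\sE(U), \sV(V)), \qquad G_{\sF}(V,U):= \oR\HHom_{\Ban}(\sF(U), \sV(V)),
\]
computed via a functorial fibrant replacement of $\sV$, and rewrite each homotopy end above as $\int^h_{U} G_{\sE}(U,U)$, $\int^h_{U} G_{\sF}(U,U)$.

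To apply Lemma \ref{bifunctordescent}, I would verify that $G_{\sE}(-,U)$ and $G_{\sF}(-,U)$ are hypersheaves in $\Ch_{\R}(X_{\an})$ for each fixed $U$. This is where the hypersheaf hypothesis on $\sV$ feeds in: for any open hypercover $\tilde{V}_{\bt}\to V$, the map $\sV(V)\to \ho\Lim_n \sV(\tilde V_n)$ is a strict quasi-isomorphism in $\Ch(\pro(\Ban))$, and $\oR\HHom_{\Ban}(\sE(U),-)$ is the right-derived functor of a right Quillen functor for the model structure of Proposition \ref{banmodelprop}, hence sends strict quasi-isomorphisms to quasi-isomorphisms and commutes with homotopy limits. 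This gives a quasi-isomorphism $G_{\sE}(V,U)\to \ho\Lim_n G_{\sE}(\tilde V_n,U)$, as required; the same argument handles $G_{\sF}$.

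Lemma \ref{bifunctordescent} then yields quasi-iso\-morph\-isms
\[
 \int^h_{U\in \Op(X_{\an})} G_{\sE}(U,U)\simeq \int^h_{U\in \cB(X_{\an})} G_{\sE}(U,U),
\]
and similarly for $G_{\sF}$, reducing the problem to a statement over polydiscs. On each polydisc $U \in \cB(X_{\an})$, the hypothesis gives a strict quasi-isomorphism $\sE(U)\to \sF(U)$, so by Lemma \ref{weakbanlemma} the induced map $G_{\sF}(U,U)\to G_{\sE}(U,U)$ is a quasi-isomorphism of real chain complexes. Finally, since the homotopy end is, by definition, a derived Hom out of the cofibrant replacement of $\R.\Hom_{\cB(X_{\an})}$ in $\Ch_{\R}(\cB(X_{\an})^{\op}\by \cB(X_{\an}))$ with the projective model structure of Proposition \ref{pshfmodelstr}, it preserves levelwise quasi-isomorphisms, and we conclude.

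The formal part is essentially mechanical once one trusts the homotopical machinery; the only real content is the hypersheaf check for the mixed bifunctors $G_{\sE},G_{\sF}$, which is the step I expect to require the most care, since it is the place where the \emph{strict} quasi-isomorphism version of hypersheaf descent (rather than the ordinary one) actually matters. Everything else is bookkeeping via $\HHom$, Lemma \ref{bifunctordescent}, and the homotopy invariance of $\int^h$.
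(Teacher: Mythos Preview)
Your proposal is correct and follows essentially the same approach as the paper: rewrite $\oR\HHom_{X_{\an},\Ban}$ as a homotopy end, check that the bifunctor $\oR\HHom_{\Ban}(\sF(-),\sV(-))$ is a hypersheaf in the first variable because $\sV$ is, invoke Lemma \ref{bifunctordescent} to reduce to $\cB(X_{\an})$, and conclude from the hypothesis on polydiscs. The only minor quibble is that your appeal to Lemma \ref{weakbanlemma} in the last step is slightly indirect; the cleaner statement is simply that $\oR\HHom_{\Ban}(-,\sV(U))$ is a derived functor and therefore sends strict quasi-isomorphisms to quasi-isomorphisms, which is how the paper phrases it.
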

\begin{proof}
 For every pro-Banach hypersheaf $\sV$, we need to show that
\[
 \oR\HHom_{X_{\an}, \Ban}(\sF, \sV) \to  \oR\HHom_{X_{\an}, \Ban}(\sE, \sV)
\]
is a quasi-isomorphism. We may write 
\[
  \oR\HHom_{X_{\an}, \Ban}(\sF, \sV) \simeq \int^h_{U \in \Op(X_{\an})}\oR\HHom_{\Ban}(\sF(U), \sV(U)).
\]
Since $\sV$ is a hypersheaf, the functor  $U \mapsto  \oR\HHom_{\Ban}(W, \sV(U))$ is a hypersheaf for all pro-Banach complexes $W$, and thus the bifunctor
\[
 \oR\HHom_{\Ban}(\sF(-), \sV(-))
\]
satisfies the conditions of Lemma \ref{bifunctordescent}, so
\[
 \oR\HHom_{X_{\an}, \Ban}(\sF, \sV) \simeq \int^h_{U \in \cB(X_{\an})}\oR\HHom_{\Ban}(\sF(U), \sV(U)).
\]

The required quasi-isomorphism now follows because the maps
\[
 \oR\HHom_{\Ban}(\sF(U), W) \to \oR\HHom_{\Ban}(\sE(U), W)
\]
are quasi-isomorphisms for all $U \in \cB(X_{\an})$ and all pro-Banach complexes $W$.
\end{proof}

\section{Equivalent descriptions of $K_{\Ban,>0}$}\label{Kequivsn}

\subsection{Differentiable cohomology}
We now develop some  analogues for infinite-dimensional Lie groups  of the results of  \cite[\S\S 4--5]{HochschildMostow}, but in a more \emph{ad hoc} fashion, with less general results.

\begin{definition}
 Given a Fr\'echet Lie group $G$ whose underlying manifold lies in $\FrM$, and   $V \in \Ch_{\R}(\FrM)$, define
\[
 \CC^{\bt}_{\dif}(G, V)
\]
to be the product total complex of the Dold--Kan conormalisation of the cosimplicial diagram
\[
\xymatrix@1{ V(1) \ar@<.5ex>[r] \ar@<-.5ex>[r] & V(G)  \ar@<.5ex>[r] \ar[r] \ar@<-.5ex>[r] & V(G \by G) \ldots },
\]
with maps coming from the maps in the nerve $BG$.

For $V \in \Ch(\pro(\Ban))$, we simply write $\CC^{\bt}_{\dif}(G, V):=\CC^{\bt}_{\dif}(G, \Sm(V)) $, for the functor $\Sm$ of Proposition \ref{Smprop}.
\end{definition}

\begin{definition}
Given a simplicial Fr\'echet manifold $Y_{\bt}$, write $N\R.Y_{\bt} \in \Ch_{\R}(\FrM)$ for the presheaf
\[
 Z \mapsto N \R.\C^{\infty}(Z,Y_{\bt}),
\]
where $\R.T$ denotes the real vector space with basis $T$, and $N$ is Dold--Kan normalisation.
\end{definition}

\begin{lemma}\label{Csmlemma}
If $G$ is a Fr\'echet Lie group whose underlying space lies in $\FrM$, 
then for all $V \in \Ch_{\R}(\FrM)$ there is a  canonical quasi-isomorphism
\[
 \CC^{\bt}_{\dif}(G, V) \to \oR\HHom_{\FrM}(N\R.BG, \Sm(V)).
\]
\end{lemma}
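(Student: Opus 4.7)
The plan is to compute the right-hand side directly by Yoneda and then verify cofibrancy of $N\R.BG$, so that the underived $\HHom$ coincides with its right-derived functor. Write $X = \R.BG$ for the simplicial presheaf on $\FrM$ with $X_k = \R.\Hom_{\FrM}(-,G^k)$, noting that $G^k \in \FrM$ since finite products of separable Fr\'echet spaces are separable Fr\'echet.

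First, by the Yoneda lemma applied to each $V_j \in \Ch_{\R}(\FrM)_0$ and each $k\ge 0$,
\[
 \Hom_{\FrM}(\R.\Hom_{\FrM}(-,G^k),V_j) = V_j(G^k).
\]
Letting $k$ vary, the Moore complex $\Hom(X_{\bt},V_j)$ is identified with the cosimplicial real vector space $V_j(G^{\bt})$ obtained from the nerve structure maps of $BG$. Taking the Dold--Kan normalisation $N_{\bt}X$ on the source corresponds under this identification to the conormalisation $N^{\bt}V_j(G^{\bt})$ of the cosimplicial object on the target, since the normalised subcomplex is a direct summand at each level. Taking products over $j$ and forming the product total complex in both simplicial directions gives a natural underived isomorphism
\[
 \HHom_{\FrM}(N\R.BG, V) \cong \CC^{\bt}_{\dif}(G,V),
\]
the right-hand side being by definition the product total complex of the conormalisation of the cosimplicial chain complex $V(G^{\bt})$.

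The second step is to upgrade this to a derived statement by showing $N\R.BG$ is cofibrant in the projective model structure of Proposition \ref{pshfmodelstr}. Each $X_k$ is projective in $\Ch_{\R}(\FrM)$ concentrated in degree $0$: Yoneda gives $\Hom(X_k,-) = \mathrm{ev}_{G^k}$, which preserves surjections, so $X_k$ lifts against every fibration. Its normalisation $N_k X$ is a direct summand of $X_k$ under Dold--Kan, hence also projective. Since $N\R.BG$ is concentrated in non-negative chain degrees, one then builds it up as a retract of a cell complex along its brutal-truncation filtration, attaching cells via the generating cofibrations $\R.\Hom_{\FrM}(-,G^k)[k]\to\cone(\R.\Hom_{\FrM}(-,G^k))[k]$, and concludes cofibrancy.

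Combining these two steps gives the claimed canonical quasi-isomorphism. No genuine obstacle arises; the only care needed is the sign and degree bookkeeping in the identification of the two product total complexes, but this is controlled because $N_k X = 0$ for $k<0$, so each total degree receives contributions from only finitely many bidegrees in the simplicial direction and the identification is componentwise. For the version of the statement involving $\Sm(V)$ on the right for pro-Banach $V$, one simply unwinds the convention $\CC^{\bt}_{\dif}(G,V):=\CC^{\bt}_{\dif}(G,\Sm(V))$ and applies the argument above with $\Sm(V) \in \Ch_{\R}(\FrM)$ in place of $V$.
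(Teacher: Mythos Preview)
Your proof is correct and follows essentially the same approach as the paper: identify $\CC^{\bt}_{\dif}(G,V)$ with the underived $\HHom_{\FrM}(N\R.BG, V)$ via Yoneda, then show $N\R.BG$ is cofibrant as a retract of the unnormalised Moore complex, which is built as a cell complex via its brutal-truncation filtration with cells $\R.G^k$. The paper makes explicit one point you leave implicit, namely that every object of $\Ch_{\R}(\FrM)$ is fibrant in the projective model structure (fibrations being levelwise surjections), which is needed to conclude that cofibrancy of the source alone suffices for the underived $\HHom$ to compute $\oR\HHom$.
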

\begin{proof}
The complex $ \CC^{\bt}_{\dif}(G, V)$ is just 
\[
 \HHom_{\FrM}(N\R.BG, \Sm(V))= \HHom_{\FrM}(N\CC_{\bt}(BG(-),\R), \Sm(V)),
\]
 and every object of $ \Ch_{\R}(\FrM)$ is fibrant, so it suffices to show that $N\R.BG $ is cofibrant. The Dold--Kan normalisation $N\R.BG $ is a retract of the total chain complex associated to the simplicial presheaf $\R.BG $. Brutal truncation of the total chain complex gives a filtration $W_0, W_1,\ldots$ whose quotients are the presheaves $ \R.G^n[-n]$, so $W_{n-1} \to W_n$ is a pushout of a generating cofibration, and is therefore a cofibration. Then $ N\R.BG$  is a retract of $\LLim_n W_n$, so  is also cofibrant.
\end{proof}

\subsection{Symmetric spaces and Relative Lie algebra cohomology}

For $U \subset X_{\an}$ an open subset, observe that $\GL_n(\sO_X(U))$ is a closed subset of a Fr\'echet space: this follows because $ \sO_X(U)$ is a Fr\'echet algebra, with the system of seminorms given by sup norms on compact subsets of $U$. Explicitly, when $U\cong D^m$ is a unit polydisc, we have
\[
\sO(D^m)= \Lim_{r<1} \ell^{1}(\{(z_1/r)^{i_1}(z_2/r)^{i_2}\ldots (z_m/r)^{i_m}\}_{i \in \N_0^m}),
\]
for the co-ordinate functions $z_1, \ldots, z_m$ on $D^m$. 
Therefore  $\GL_n(\sO_X(U)) \subset M_n(\sO_X(U))$ can be characterised as 
  the closed subspace 
 \[
  \{(g, \lambda) \in \Mat_n(\sO(D^m) )\by \sO(D^m) ~:~ \lambda\det(g)=1\}.
 \]

Evaluation at a basepoint $x \in U$ gives a surjection $\GL_n(\sO_X(U))\to \GL_n(\Cx)$, and hence a diffeomorphism $\GL_n( \sO_X(U))\cong \GL_n(\Cx) \ltimes
\ker(\GL_n\sO_X(U)\xra{x^*}\GL_n(\Cx))$ given by multiplication. Proposition \ref{contractibleprop} below shows that $ \GL_n(\sO_X(U))$ is in fact a Fr\'echet manifold, so in particular  
\[
 U_n\backslash \GL_n(\sO_X(U))\cong (U_n \backslash  \GL_n(\Cx)) \by\ker(\GL_n\sO_X(U)\xra{x^*}\GL_n(\Cx))
\]
is  also a Fr\'echet manifold. 


Similarly, for any  Fr\'echet algebra $A$, we have that $\GL_n(A)$ is a closed subset of a Fr\'echet space. If $A$ is a Banach algebra, then $\GL_n(A)\subset M_n(A)$ is an open subset of a Banach space, hence a Banach manifold. It follows that for any multiplicatively convex Fr\'echet algebra $A$, the space $\GL_n(A)$ is an inverse limit of Banach manifolds, but in general it will not be open in $M_n(A)$. 

 If $A$ has the structure of an augmented Fr\'echet $R$-algebra for  some finite-dimensional real algebra  $R$, and $K$ is a Lie subgroup of $\GL_n(R)$, then the reasoning above shows that $K\backslash \GL_n(A)$ is  a Fr\'echet manifold whenever $\GL_n(A)$ is so.

\begin{proposition}\label{contractibleprop}
 For $B \subset X_{\an}$ an open polydisc, the space $ \GL_n(\sO_X^{\an}(B))$ is a Fr\'echet manifold, and the morphism $U_n \to G:=\GL_n(\sO_X^{\an}(B))$
is a smooth $U_n$-equivariant deformation retract, in the sense that there exists a $\C^{\infty}$ morphism, equivariant with respect to the  left $U_n$-action,
\[
 h \co G \by \R \to G
\]
with $h(-,1)$ the identity map and $h(-,0)$  a retraction onto $U_n$.

In particular, the Fr\'echet manifold
$ M:= U_n \backslash \GL_n(\sO_X^{\an}(B))$ is smoothly contractible.
\end{proposition}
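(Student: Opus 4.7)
The plan is to construct a single smooth $U_n$-equivariant contracting homotopy on $G$ by combining two classical processes: scaling in the polydisc $B$ (which retracts $G$ onto the constant-matrix subgroup $\GL_n(\Cx)\hookrightarrow G$) and polar decomposition on $\GL_n(\Cx)$ (which retracts constants onto $U_n$). Both are automatically $U_n$-equivariant for the left action, so they can be combined in closed form.

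After translation, take $B=\{|z_i|<1\}$ centred at $0$. For the first process, precomposition $(g,s)\mapsto g(s\cdot-)$ gives a map $G\times[0,1]\to G$, well-defined because $B$ is stable under $z\mapsto sz$ for $s\in[0,1]$; smoothness reduces to smoothness of the corresponding operation on the Fr\'echet space $\sO_X^{\an}(B)$, which for each $s$ is a continuous linear endomorphism (in the $\ell^1$ Taylor-coefficient norms $\|\cdot\|_r$ displayed just before the proposition one has $\|g(s\cdot)\|_r=\|g\|_{sr}\le\|g\|_r$), and whose derivatives in $s$ introduce $z_i\partial_i$, with each $\partial_i$ a continuous endomorphism of $\sO_X^{\an}(B)$ by Cauchy's estimates. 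For the second process, the classical polar decomposition furnishes a real-analytic diffeomorphism $\GL_n(\Cx)\cong U_n\times P_n$ ($P_n$ the space of positive Hermitian matrices), and $(p,t)\mapsto p^t:=\exp(t\log p)$ smoothly contracts $P_n$ onto $\{I\}$.

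Choosing smooth cutoffs $\phi,\tau\co\R\to[0,1]$ that vanish on $(-\infty,0]$ and equal $1$ on $[1,\infty)$, I would define
\[
 h(g,t)(z) := g(\phi(t)z)\cdot g(0)^{-1}\cdot u\cdot p^{\tau(t)},
\]
where $g(0)=up$ is the polar decomposition of $\ev_0(g)$. Substituting gives $h(g,1)(z)=g(z)\cdot g(0)^{-1}\cdot g(0)=g(z)$ and $h(g,0)=g(0)\cdot g(0)^{-1}\cdot u=u\in U_n$ (constant in $z$), so $h(-,0)$ is a retraction onto $U_n$. For $u'\in U_n$, the polar decomposition of $u'g(0)$ is $(u'u)p$, and the inverse $(u'g(0))^{-1}=g(0)^{-1}(u')^{-1}$ cancels the new unitary factor, yielding $h(u'g,t)=u'h(g,t)$. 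The smooth contractibility of $M=U_n\backslash G$ then follows by descending $h$ to the quotient, which sends all of $M$ to the class of $I$ at $t=0$.

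I expect the only serious technical point to be verifying smoothness of the scaling factor $(g,t)\mapsto g(\phi(t)\cdot)$ in the Kriegl--Michor convenient-calculus sense, but this reduces to continuity of complex differentiation on $\sO_X^{\an}(B)$ via Cauchy's integral formula together with the characterisation of smooth maps through smooth curves in \cite[Lemma to Theorem 3.12]{KrieglMichor}; so no real obstacle arises.
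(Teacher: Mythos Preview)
Your proof is correct and takes a genuinely different route from the paper's. The paper proceeds inductively in the polydisc coordinates: it uses the relative Maurer--Cartan form $\delta_m g = g^{-1}\,d_m g$ to identify $\GL_n(\sO(D^m))$ with $\GL_n(\sO(D^{m-1}))\times(\gl_n\otimes\Omega^1_{D^m/D^{m-1}})$, the inverse being supplied by iterated integrals (path-ordered exponentials). Iterating gives a $\GL_n(\Cx)$-equivariant diffeomorphism $\GL_n(\sO(D^d))\cong \GL_n(\Cx)\times V$ with $V$ a Fr\'echet space, after which linear contraction of $V$ and polar decomposition on $\GL_n(\Cx)$ finish the job. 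Your scaling homotopy $g\mapsto g(s\,\cdot)$ replaces the entire iterated-integral machinery by a single elementary step; it is already $\GL_n(\Cx)$-equivariant, and the smoothness verification via continuity of $\partial_i$ and the Euler operator on $\sO(B)$ is exactly the right check. The paper's approach buys an explicit trivialisation of $G$ in terms of holomorphic $1$-forms, which resonates with the de Rham and cyclic-homology calculations later in the paper but is not strictly needed for the proposition itself. One small cost of your formulation: the cutoffs $\phi,\tau$ make $h$ only $\C^{\infty}$, whereas the paper notes in the subsequent remark that its homotopy is real analytic; you could recover that by concatenating the two stages rather than blending them, using $s$ itself on each half-interval.
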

\begin{proof}
If we write $D$ for the open unit disc in $\Cx$, then $B$ is isomorphic to the $d$-fold product  $D^d$. The projections $D^d \to D^{d-1} \to \ldots \to D \to \bt$ then induce morphisms
\[
 \GL_n(\Cx) \to \GL_n(\sO(D)) \to  \GL_n(\sO(D^2))\to \ldots \to \GL_n(\sO(D^d)),
\]
 where $\sO(U)$ denotes the ring of complex-analytic functions on $U$.

Moreover, we have a diffeomorphism 
\[
 \GL_n(\sO(D^m)) \cong \GL_n(\sO(D^{m-1}))\by \{g \in\GL_n(\sO(D^m))\,:\, g(z_1, \ldots, z_{m-1},0)=I\},
\]
with $g$ mapping to $(g(z_1, \ldots, z_{m-1},0), g(z_1, \ldots, z_{m-1},0)^{-1}g)$.

If we write $d_mg:= \frac{\pd g}{\pd z_m} dz_m$ and $\delta_mg:= g^{-1}d_mg $, we get a smooth map
\[
\delta_m\co \GL_n(\sO(D^m)) \to \gl_n \ten \sO(D^m)dz_m ,
\]
whose fibres are the orbits of the left multiplication by $\GL_n(\sO(D^{m-1})) $. 

A smooth inverse to 
\[
 \delta_m \co \{g \in\GL_n(\sO(D^m))\,:\, g(z_1, \ldots, z_{m-1},0)=I\} \to  \gl_n \ten \sO(D^m)dz_m
\]
 is given by iterated integrals. Explicitly, we   send $\omega$ to the function $\iota(\omega)$ given at a point $(z_1, \ldots, z_m)$ by
\[
 I+ \sum_{k >0} \int_{1 \ge t_1 \ge \ldots \ge t_k \ge 0}  (\gamma^*\omega)(t_k) \cdots
(\gamma^*\omega)(t_1), 
\]
for the path $\gamma(t)= ( z_1, \ldots, z_{m-1},t z_m)$.
To see that this converges, consider a closed polydisc $\bar{D}^m(r)$ of radius $r<1$. Since $\bar{D}^m(r)$ is compact, $\omega$ is bounded here, by $C_r$ say, so $\gamma^*\omega$ is bounded by $rC_r $, giving
\[
 \|\iota(\omega)\|_{\bar{D}^m(r)} \le \sum_{k\ge 0} (rC_r)^k/k! = \exp(rC_r).
\]

We therefore have a diffeomorphism
\[
 \GL_n(\sO(D^m)) \cong \GL_n(\sO(D^{m-1}))\by (\gl_n \ten \sO(D^m)dz_m).
\]
Proceeding inductively, we obtain a $\GL_n(\Cx)$-equivariant diffeomorphism
\[
 \GL_n(\sO(D^d)) \cong \GL_n(\Cx) \by (\gl_n \ten\prod_{m=1}^d \sO(D^m)dz_m),
\]
which in particular ensures that $\GL_n(\sO(D^d))$ is a Fr\'echet manifold.

Since $\sO(D^m)$ is smoothly contractible, it follows that 
\[
 \GL_n(\Cx) \into \GL_n(\sO(D^d))
\]
is a smooth $\GL_n(\Cx)$-equivariant deformation retract. Because $U_n \to  \GL_n(\Cx)$  is a smooth $U_n$-equivariant deformation retract (taking $h(g,t)= g (g^{\dagger}g)^{(t-1)/2}$), the result follows.
\end{proof}

\begin{remarks}\label{smoothDeligneRmk}
 In fact, the formulae in the proof of Proposition \ref{contractibleprop} ensure that the homotopy  is real analytic, once we note that  the isomorphism $g \mapsto \log(g^{\dagger}g)$  between $U_n\backslash \GL_n(\Cx)$ and positive semi-definite Hermitian matrices is so.

The proof of Proposition \ref{contractibleprop} also adapts to show that $U_n \to \GL_n(\C^{\infty}(\R^d,\Cx))$ (resp.  $O_n \to \GL_n(\C^{\infty}(\R^d,\R))$)  is a   smooth $U_n$-equivariant (resp. $O_n$-equivariant) deformation retract of Fr\'echet manifolds, by replacing holomorphic differentials with smooth differentials. 
\end{remarks}

\begin{definition}
Given a Fr\'echet manifold $Y$ and a $c^{\infty}$-complete real vector space $V$, define
\[
 A^{\bt}(Y,V)
\]
 to be the de Rham complex of $Y$ with coefficients in $V$ (denoted $\Omega^{\bt}(Y,V)$ in \cite[Remark 33.22]{KrieglMichor}), equipped with its de Rham differential. Explicitly, $A^m(Y,V) $ consists of smooth sections of the bundle $L^m_{\mathrm{alt}}(TY, V)$ of bounded alternating $m$-linear functions from the kinematic tangent space of $Y$ to $V$, with the topology of \cite[Definition 3.11]{KrieglMichor}. 
Note that $A^0(Y,V)=\C^{\infty}(Y,V)$.
\end{definition}

\begin{proposition}\label{symmspaceprop1}
For $M$  a smoothly contractible  Fr\'echet manifold,   any real  Banach space $V$, and any  Fr\'echet manifold $Y$, the inclusion morphism 
\[
 A^0(Y,V) \to  A^0(Y, A^{\bt}(M,V))
\]
is a quasi-isomorphism of real cochain complexes.
\end{proposition}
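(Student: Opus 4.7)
The plan is to exhibit a contracting chain homotopy for $V \to A^{\bt}(M,V)$ by continuous $\R$-linear maps of Fr\'echet spaces, then transport it through the functor $A^0(Y,-)=\C^{\infty}(Y,-)$, which carries any continuous $\R$-linear map $T\co W_1 \to W_2$ to the linear map $f\mapsto T\circ f$ between smooth-section spaces, compatibly with chain structure.

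First, pick a smooth deformation retraction $H \co M \by \R \to M$ witnessing the smooth contractibility of $M$, so that $H(-,0)\equiv m_0$ for some $m_0 \in M$ and $H(-,1)=\id_M$. Define a Poincar\'e homotopy $K \co A^k(M,V) \to A^{k-1}(M,V)$ by
\[
 K\omega := \int_0^1 \iota_{\pd_t}(H^*\omega)\bigr|_{t}\,dt,
\]
where the integrand is regarded as a smooth curve in $A^{k-1}(M,V)$. Cartan's magic formula $\cL_{\pd_t} = d\iota_{\pd_t} + \iota_{\pd_t}d$ on $\R\by M$, combined with the fundamental theorem of calculus and naturality of $d$ under pullback, yields the homotopy identity
\[
 dK + Kd \;=\; \id_{A^{\bt}(M,V)} - \iota \circ \ev_{m_0},
\]
where $\ev_{m_0}\co A^0(M,V)\to V$ is evaluation at $m_0$ (extended by zero in positive degrees) and $\iota\co V \to A^0(M,V)$ is inclusion as constant maps.

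Second, verify that $d$, $\iota$, $\ev_{m_0}$ and $K$ are continuous $\R$-linear maps between the relevant Fr\'echet topological vector spaces of \cite[Remark 33.22]{KrieglMichor}. Only $K$ is non-routine: its continuity amounts to the statement that integration against $dt$ preserves smoothness in any auxiliary parameter, which follows from the exponential law \cite[Theorem 3.12]{KrieglMichor} applied to smooth test curves into $A^{k-1}(M,V)$, i.e. a standard Fubini-type interchange of differentiation and parameter integration.

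Finally, apply the functor $\C^{\infty}(Y,-)$. Since it sends continuous $\R$-linear maps of Fr\'echet spaces to linear maps of smooth-section spaces by post-composition, and since the de Rham differential in the $M$-direction is exactly the differential on $A^0(Y, A^{\bt}(M,V))$, the identity above transports verbatim to give $dK_* + K_*d = \id - \iota_*\circ\ev_{m_0,*}$ on $A^0(Y, A^{\bt}(M,V))$. This exhibits $A^0(Y,V) \to A^0(Y, A^{\bt}(M,V))$ as a strong deformation retract of real cochain complexes, so in particular a quasi-isomorphism. The principal technical obstacle is the continuity claim for $K$; once it is in hand, functoriality of $\C^{\infty}(Y,-)$ does the rest.
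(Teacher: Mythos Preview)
Your approach is essentially the same as the paper's: construct the Poincar\'e homotopy $K=I\circ H^*$ on $A^{\bt}(M,V)$ via fibre integration along the contracting homotopy, verify it is a smooth linear operator, and then push it through $A^0(Y,-)$ by post-composition. The paper cites \cite[Proposition 2.7]{KrieglMichor} for existence of the integral and \cite[Remark 33.22]{KrieglMichor} for $c^{\infty}$-completeness of $A^{k-1}(M,V)$, which is exactly the content of your ``continuity of $K$'' step.

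One small wording issue: for an infinite-dimensional Fr\'echet manifold $M$, the spaces $A^k(M,V)$ are convenient ($c^{\infty}$-complete) but need not be Fr\'echet, so ``continuous linear'' is not quite the right adjective---what you need, and what the paper uses, is that $K$ is a \emph{smooth} (equivalently bounded) linear map between convenient vector spaces, so that post-composition with it preserves smoothness of maps out of $Y$. This does not affect the argument, only the phrasing.
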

\begin{proof}
We now follow the proof of the Poincar\'e lemma in \cite[33.20, 34.2]{KrieglMichor}.

The contracting  homotopy $h \co M \by \R \to M$ 
gives a  linear map 
\[
 h^* \co A^{\bt}(M,V) \to A^{\bt}(M\by \R,V),
\]
which is smooth by the exponential law. We then define a smooth map $I$ on 
\[
 A^k(M\by \R,V) \cong A^0(\R, A^k(M,V)) \oplus A^1(\R, A^{k-1}(M,V))
\]
to be $0$ on the first factor and 
\[
 \int_{[0,1]}\co A^1(\R, A^{k-1}(M,V)) \to A^{k-1}(M,V)
\]
on the second factor; this exists by  \cite[Proposition 2.7]{KrieglMichor} because $A^{k-1}(M,V)$ is a $c^{\infty}$-complete vector space, as observed in  \cite[Remark 33.22]{KrieglMichor}.

Setting $k := I \circ h^*$ gives a smooth linear map $A^k(M,V) \to A^{k-1}(M,V)$ which acts as a contracting homotopy for the complex $ V \to A^{\bt}(M,V)$. We therefore have contracting homotopies on applying $A^0(Y,-)$, giving the required quasi-isomorphisms.
\end{proof}

\begin{definition}
 Given a Fr\'echet manifold $Y$, write $EY$ for the  simplicial manifold given by 
$
 (EY)_m := Y^{m+1}
$,
with operations 
\begin{align*}
\pd_i(y_0, \ldots, y_m) &= (y_0,  \ldots,y_{i-1}, y_{i+1}, \ldots,  y_m),\\
\sigma_i(y_0, \ldots, y_m) &= (y_0,  \ldots,y_i, y_i, \ldots,  y_m).
\end{align*}
\end{definition}

Note that for a Fr\'echet Lie group $G$, the diagram $EG$ admits a right action componentwise by $G$, with quotient isomorphic to the nerve $BG$.

\begin{definition}
 Given a simplicial  Fr\'echet manifold $Y_{\bt}$  and a $c^{\infty}$-complete vector space $E$, define 
\[
 A^0(Y_{\bt},E)
\]
to be the cochain complex  given by  Dold--Kan conormalisation of 
\[
\xymatrix@1{ A^0(Y_0,E)\ar@<.5ex>[r] \ar@<-.5ex>[r] & A^0(Y_1,E)   \ar@<.5ex>[r] \ar[r] \ar@<-.5ex>[r] & A^0(Y_2,E ) \ldots }.
\]
\end{definition}

\begin{corollary}\label{symmspacecor1}
Take a smoothly  contractible Fr\'echet manifold $M$ equipped with smooth action of a Fr\'echet Lie group $G$.
Then there is a canonical quasi-isomorphism of real cochain complexes
\[
 \CC^{\bt}_{\dif}(G, V) \to \Tot A^0(EG, A^{\bt}(M, V))^G
\]
to $G$-invariants in the total complex of the double complex,
where the  $G$-action is given by combining the right actions of $G$ on $EG$ and on $M$.
\end{corollary}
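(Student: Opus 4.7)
The plan is to view the claimed map as a morphism of first-quadrant bicomplexes and to verify the quasi-isomorphism columnwise, reducing to Proposition \ref{symmspaceprop1}.

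First, I would identify the source. The diagonal right $G$-action on $(EG)_m = G^{m+1}$ is free with quotient the nerve $(BG)_m = G^m$, and in the Corollary $V$ carries no $G$-action, so
\[
 A^0((EG)_m, V)^G = A^0((BG)_m, V) = \C^{\infty}(G^m, V).
\]
Dold--Kan conormalisation in $m$ therefore identifies $\CC^{\bt}_{\dif}(G, V)$ with $N A^0(EG, V)^G$. The canonical map in the statement is induced from the inclusion $V \hookrightarrow A^{\bt}(M, V)$ of cohomological degree-zero constants; this is $G$-equivariant because $G$ acts on $A^{\bt}(M, V)$ by pullback along the diffeomorphisms $M \to M$, which fixes constants. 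This realises the source as the $q = 0$ row of the bicomplex
\[
 D^{p,q} := A^0((EG)_p, A^q(M, V))^G,
\]
with horizontal differential from the simplicial structure on $EG$.

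Second, I would show that for each fixed $p$ the induced map of $p$-columns is a quasi-isomorphism. The principal $G$-bundle $(EG)_p \to (BG)_p$ is trivialisable, for instance via
\[
 (g_0, \ldots, g_p) \mapsto ((g_0 g_p^{-1}, \ldots, g_{p-1} g_p^{-1}),\, g_p),
\]
identifying $(EG)_p \cong G^p \by G$ with $G$ acting by right multiplication on the second factor. For any $G$-module $W$, evaluation at $e_G \in G$ then gives an isomorphism $A^0((EG)_p, W)^G \cong A^0(G^p, W)$. Since the de Rham differential on $M$ commutes with the $G$-action and hence descends, applying this with $W = A^{\bt}(M, V)$ yields an isomorphism of cochain complexes
\[
 A^0((EG)_p, A^{\bt}(M, V))^G \cong A^0(G^p, A^{\bt}(M, V)),
\]
under which the constant-inclusion corresponds to $A^0(G^p, V) \hookrightarrow A^0(G^p, A^{\bt}(M, V))$. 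Since the underlying manifold of $G$ lies in $\FrM$, so does $G^p$, and Proposition \ref{symmspaceprop1} applied with $Y = G^p$ shows this inclusion is a quasi-isomorphism.

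Third, both bicomplexes are concentrated in the first quadrant $p, q \ge 0$, so the spectral sequence for the column filtration converges strongly and a columnwise quasi-isomorphism of bicomplexes induces a quasi-isomorphism on total complexes, which is the asserted statement.

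The main subtlety is that the trivialisation of $(EG)_p \to (BG)_p$ used in the second step is not compatible with the simplicial face maps as $p$ varies, so it cannot be promoted to an isomorphism of bicomplexes. This is harmless because only the per-column quasi-isomorphism of the canonical map of bicomplexes is needed for the spectral-sequence argument, and that is precisely the form in which Proposition \ref{symmspaceprop1} supplies it.
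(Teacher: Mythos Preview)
Your proposal is correct and follows essentially the same approach as the paper: identify $A^0((EG)_m, W)^G \cong A^0(G^m, W)$ by trivialising the principal bundle, invoke Proposition~\ref{symmspaceprop1} columnwise with $Y = G^m$, and pass to total complexes. Your version is more explicit about the bundle trivialisation and about why its failure to be simplicial does no harm, but these are elaborations of the paper's terse argument rather than a genuinely different route.
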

\begin{proof}
 
By Proposition \ref{symmspaceprop1}, the maps 
\[
A^0(G^m,V) \to A^0(G^m,A^{\bt}(M,V) )
\]
are all quasi-isomorphisms. Since $ A^0((EG)_m,  V)^G \cong A^0(G^m,V)$, these combine to give a quasi-isomorphism
\[
 \CC^{\bt}_{\dif}(G, V)= A^0(EG,  V)^G  \to \Tot A^0(EG, A^{\bt}(M, V))^G.
\]
\end{proof}

\begin{definition}
Given Fr\'echet spaces $U,V$, define $U \hten V$ to be the projective tensor product of $U$ and $V$. This is a Fr\'echet space with the property that maps from $U\hten V$ to pro-Banach spaces $W$ correspond to continuous bilinear maps $U\by V \to W$.

Write  $V^{\hten n}:= \overbrace{V\hten V \hten \ldots \hten V}^n$, $ \hat{\Symm}^nV:= V^{\hten n}/ \Sigma_n$ and  $\hat{\Lambda}^n V:= V^{\hten n}\ten_{\R[\Sigma_n]}\sgn$, where  $\sgn$ is the one-dimensional real vector space on which $\Sigma_n$ acts by the signature.
\end{definition}

By \cite[Proposition 5.8]{KrieglMichor}, these operations coincide with the corresponding bornological tensor operations. 
Similarly, we can define tensor operations on bundles over a manifold.  

\begin{lemma}\label{integratelemma}
If $E$ is  a $c^{\infty}$-complete (cf. \cite[Theorem 2.14]{KrieglMichor})  real  vector space, $K$ a  compact finite-dimensional manifold with smooth measure $\mu$, and $f \co K \to E$ is smooth, then the integral 
\[
 \int_K f d\mu
\]
exists. Moreover, for all continuous linear functionals $\ell$ on $E$, we have $\ell (\int_K f d\mu) = \int_K (\ell \circ f) d\mu$.   
\end{lemma}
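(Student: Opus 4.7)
The plan is to reduce to the one-variable case covered by \cite[Proposition 2.7]{KrieglMichor} (which gives $\int_a^b g(t)\,dt \in E$ for any smooth $g \co [a,b] \to E$ into a $c^\infty$-complete space, together with the commutation with continuous linear functionals), and then handle a general manifold by a partition of unity.

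First I would choose a finite smooth partition of unity $\{\rho_\alpha\}$ on $K$ subordinate to a finite atlas $\{(U_\alpha, \phi_\alpha)\}$ with $\phi_\alpha(U_\alpha) \subset (0,1)^d$, and use the pullback of $\mu$ to write $(\phi_\alpha^{-1})^*(\rho_\alpha \mu) = g_\alpha\,dx_1 \cdots dx_d$ for smooth densities $g_\alpha$ compactly supported in $(0,1)^d$. It then suffices to construct each integral
\[
\int_{[0,1]^d} g_\alpha(x)\, f(\phi_\alpha^{-1}(x))\, dx_1 \cdots dx_d \in E,
\]
and to define $\int_K f\,d\mu$ as the sum over $\alpha$; independence of all choices will be automatic once the second assertion is established.

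The main step is then an iterated application of \cite[Proposition 2.7]{KrieglMichor}. Writing $F \co [0,1]^d \to E$ for the smooth $c^\infty$-compactly supported integrand (absorbing $g_\alpha$ and the chart), I would define inductively
\[
F_k(x_1,\ldots,x_{d-k}) := \int_0^1 F_{k-1}(x_1,\ldots,x_{d-k},t)\,dt, \qquad F_0 := F,
\]
with each $F_k \co [0,1]^{d-k} \to E$, and set $\int_{[0,1]^d} F\,dx := F_d \in E$. The non-trivial point to verify is that each $F_k$ is again smooth, so that the next integration step is legal. For this I would use the exponential law \cite[Theorem 3.12]{KrieglMichor}: a smooth map $[0,1]^{d-k} \to E$ corresponds to a smooth map $[0,1]^{d-k+1} \to E$ after precomposing with a smooth curve in $[0,1]^{d-k}$; differentiation under the integral sign in the remaining variables reduces to the one-variable statement that $\frac{\partial}{\partial s}\int_0^1 F_{k-1}(\gamma(s),t)\,dt = \int_0^1 \frac{\partial}{\partial s}F_{k-1}(\gamma(s),t)\,dt$, which follows from \cite[Proposition 2.7]{KrieglMichor} applied to the difference-quotient curves in the $c^\infty$-complete space $E$ (using that Riemann integration commutes with continuous seminorms, so the Mackey-convergence of difference quotients is preserved).

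The commutation with continuous linear functionals $\ell \co E \to \R$ follows in the one-dimensional case by the second clause of \cite[Proposition 2.7]{KrieglMichor}, and then propagates through the iterated integrals by induction, since $\ell \circ F_{k-1}$ is again smooth into $\R$ and ordinary Fubini applies. Once this holds for a single chart, independence of the partition/atlas choices in the first paragraph is the classical change-of-variables argument, tested against all $\ell$, using that $c^\infty$-complete spaces have enough continuous linear functionals to separate points. The main obstacle I anticipate is the smoothness of the partial integrals $F_k$; everything else is bookkeeping on top of the one-variable theorem already in \textit{loc.\ cit.}
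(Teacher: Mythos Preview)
Your approach is correct and essentially the same as the paper's: reduce to cubes by a partition of unity and iterate \cite[Proposition 2.7]{KrieglMichor}. The paper bypasses the smoothness verification you flag as the main obstacle by citing \cite[Lemma 27.17]{KrieglMichor}, which says $\C^{\infty}(\R^m,E)$ is itself $c^{\infty}$-complete; one then integrates the curve $t \mapsto F(-,t)$ directly in the target $\C^{\infty}(\R^{d-1},E)$, so the partial integral is smooth by construction and no differentiation-under-the-integral argument is needed.
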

\begin{proof}
By  \cite[Lemma 2.4]{KrieglMichor}, there is a natural topological embedding of $E$ into the space of 
all linear functionals on $E^*$ which are
bounded on equicontinuous sets. It therefore suffices to show that the functional
\[
 \ell \mapsto \int_K (\ell \circ f) d\mu
\]
on $E^*$ is given by  a (necessarily unique) element of $E$.

The question is local on $K$, so we need only show that for any smooth $g\co \R^n \to E$, the integral
\[
 \int_{[0,1]^n} g dx_1\wedge \ldots \wedge d x_n
\]
lies in $E$. This follows by repeated application of \cite[Proposition 2.7]{KrieglMichor}, the spaces $\C^{\infty}(\R^m,E)$ being $c^{\infty}$-complete by \cite[Lemma 27.17]{KrieglMichor}.
\end{proof}

\begin{proposition}\label{cohoderhamprop0} 
If $A$ is a  Fr\'echet algebra $A$ for which  $G:=\GL_n(A)$
is a Fr\'echet manifold    admitting a smooth $K$-equivariant deformation retract to a   compact finite-dimensional Lie group $K \subset G$, then for any real Banach space $V$,  there is a canonical zigzag of quasi-isomorphisms 
\[
 \CC^{\bt}_{\dif}(G,V) \simeq A^{\bt}(K\backslash G, V)^{G},
\]
noting that $K\backslash G$ has a natural Fr\'echet manifold structure because $K \to G$ is a retract.
\end{proposition}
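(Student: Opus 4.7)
The plan is to apply Corollary \ref{symmspacecor1} with $M := K \backslash G$, and then to show that the resulting double complex collapses onto the $G$-invariant de Rham complex of $M$.

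The smooth $K$-equivariant deformation retract $h \co G \by \R \to G$ descends by $K$-equivariance to a smooth deformation retract of $M = K \backslash G$ onto a point, so $M$ is smoothly contractible (with the Fr\'echet manifold structure inherited from the retract). Corollary \ref{symmspacecor1} then yields
\[
 \CC^{\bt}_{\dif}(G, V) \xra{\simeq} \Tot A^0(EG, A^{\bt}(M, V))^G.
\]

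The main step is to prove that the canonical augmentation
\[
 A^{\bt}(M, V)^G \lra \Tot A^0(EG, A^{\bt}(M, V))^G,
\]
obtained by pulling back along the projection $EG \to \bt$, is a quasi-isomorphism. I plan to filter the target by the de Rham degree on $M$: the associated $E_1^{p,q}$ page computes the differentiable cohomology $H^q_{\dif}(G, A^p(M, V))$ with coefficients in the smooth $G$-module $A^p(M, V)$. Since $G$ acts transitively on $M = K \backslash G$ with stabiliser $K$, the $G$-module $A^p(M, V)$ identifies with the smooth coinduction $\C^{\infty}(G, W_p)^K$ of the finite-dimensional $K$-module $W_p := \Lambda^p(\gl_n(A)/\fk)^* \hten V$ from $K$ to $G$. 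A Shapiro-type identification then yields $H^q_{\dif}(G, A^p(M, V)) \cong H^q_{\dif}(K, W_p)$, and the compactness of $K$ together with integration against normalised Haar measure, valid on Fr\'echet coefficients by Lemma \ref{integratelemma}, furnishes a contracting homotopy of the differentiable bar complex of $K$ in positive degrees. Thus $E_1^{p,q}$ is concentrated in $q = 0$, where it equals $A^p(M, V)^G$, so $E_2^{p, 0} = H^p(A^{\bt}(M, V)^G)$; tracing the identifications shows the augmentation induces the identity on $E_2$, giving the required quasi-isomorphism.

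The main obstacle will be making the Shapiro-type identification precise at the level of differentiable cochains with Fr\'echet coefficients: one needs the coinduction isomorphism to be compatible with the bar construction, and $K$-averaging to interact correctly with the projective tensor products in the coefficients. An alternative would be to bypass the spectral sequence by constructing a $G$-equivariant contracting homotopy for the augmentation directly, combining $K$-averaging on $EG$ with the fibrewise contraction of $A^{\bt}(M, V)$ used in the proof of Corollary \ref{symmspacecor1}.
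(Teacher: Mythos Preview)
Your overall strategy is sound, but there is a factual slip: the $K$-module $W_p = L^p_{\mathrm{alt}}(\g/\fk,V)$ is \emph{not} finite-dimensional, since $\g=\gl_n(A)$ is an infinite-dimensional Fr\'echet Lie algebra. This is not fatal --- the averaging argument over the compact group $K$ only needs the target to be $c^{\infty}$-complete (Lemma \ref{integratelemma}), not finite-dimensional --- but your description of $W_p$ as $\Lambda^p(\g/\fk)^*\hten V$ should be replaced by the space of bounded alternating $p$-linear maps.

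The paper does not run the spectral-sequence/Shapiro argument you outline; it takes precisely your ``alternative'' route, made explicit. After Corollary \ref{symmspacecor1} one must show that
\[
 g\co A^{\bt}(M,V)^G \lra \Tot A^0(EG,A^{\bt}(M,V))^G
\]
is a quasi-isomorphism. Writing $TM=(\g/\fk)\by_K G$ gives $A^i(M,V)=A^0(G,L^i_{\mathrm{alt}}(\g/\fk,V))^K$, and the coordinate change $(G\by EG)/G\cong EG$, $(g,h_0,\ldots,h_m)\mapsto (h_0g^{-1},\ldots,h_mg^{-1})$, converts $g_i$ into the map
\[
 L^i_{\mathrm{alt}}(\g/\fk,V)^K \lra A^0(EG,L^i_{\mathrm{alt}}(\g/\fk,V))^K.
\]
The extra degeneracy $(h_0,\ldots,h_m)\mapsto(1,h_0,\ldots,h_m)$ gives a contracting homotopy $c$ for the underlying map before taking $K$-invariants; it is not $K$-equivariant, but $h(v):=\int_K k^{-1}c(kv)\,d\mu(k)$ is, and contracts the $K$-invariant complex. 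This is exactly the Shapiro step unwound by hand, and it sidesteps having to set up coinduction and Shapiro for smooth $G$-modules with Fr\'echet coefficients.
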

\begin{proof}
Write  $M:= K\backslash G$,  $\g:= \gl_n(A)$ (a Fr\'echet Lie algebra), and write $\fk$ for the Lie algebra of $K$.
By Corollary \ref{symmspacecor1}, we need to show that the map 
\[
g \co A^{\bt}(M,V)^G \to   \Tot A^0(EG, A^{\bt}(M, V))^G
\]
is a  quasi-isomorphism.
%

The kinematic tangent bundle $TM$ of $M$ is given by the quotient $TM=(\g/\fk) \by_K G  $, where $K$ acts on $(\g/\fk) $ via the adjoint action. Let $K$ act with the left action on $G$  and with the right action on $EG$, and let $K$ act on  the first factor of $G \by EG$. Then there is a $K$-equivariant isomorphism $(G \by EG)/G \cong EG$ given by $(g, h_0, \ldots, h_m)  \mapsto (h_0g^{-1}, \ldots, h_mg^{-1})$.
 We may therefore write the map $g_i$ (the degree $i$ component of $g$) as
\begin{align*}
 A^0(G, L^i_{\mathrm{alt}}((\g/\fk),V))^{K \by G} &\to A^0(G \by EG ,L^i_{\mathrm{alt}}((\g/\fk),V))^{K \by G}\\
L^i_{\mathrm{alt}}((\g/\fk),V)^{K} &\to A^0(EG ,L^i_{\mathrm{alt}}((\g/\fk),V))^{K}.
\end{align*}

Now, the augmented simplicial diagram $EG \to *$  admits an extra degeneracy $(EG)_m \to (EG)_{m+1}$, given by 
$
 (h_0, \ldots , h_m) \mapsto (1, h_0, \ldots , h_m)  
$.
 This gives a contracting homotopy $c_i$ for 
\[
 f_i \co L^i_{\mathrm{alt}}((\g/\fk),V) \to A^0(EG ,L^i_{\mathrm{alt}}((\g/\fk),V)).
\]
It is not $K$-invariant, but since $K$ is compact and the spaces involved are all $c^{\infty}$-complete by \cite[Lemma 27.17]{KrieglMichor}, we may integrate as in Lemma \ref{integratelemma}, setting
\[
 h_i(v):= \int_{k \in K} k^{-1}c_i(kv) d \mu(k),
\]
where $\mu$ is Haar measure, normalised so that $\mu(K)=1$. Evaluation on linear functionals shows that $h$ is a $K$-equivariant contracting homotopy, so induces a quasi-isomorphism on $K$-invariants.
\end{proof}

\begin{corollary}\label{cohoderhamprop}
For $B  \in  \cB(X_{\an})$ an open polydisc, and any real Banach space $V$, there is a canonical zigzag of quasi-isomorphisms 
\[
 \CC^{\bt}_{\dif}(\GL_n(\sO(B)),V) \simeq A^{\bt}(U_n\backslash \GL_n(\sO(B)), V)^{\GL_n(\sO(B))}.
\]
\end{corollary}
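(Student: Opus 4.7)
The proof is essentially a one-line combination of two results already established in the paper. The plan is to observe that Corollary \ref{cohoderhamprop} is precisely the specialisation of Proposition \ref{cohoderhamprop0} to the Fr\'echet algebra $A = \sO_X^{\an}(B)$ and the compact Lie subgroup $K = U_n$, so all that needs to be verified is the hypothesis of that proposition.

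First, I note that for an open polydisc $B \in \cB(X_{\an})$, the ring $\sO_X^{\an}(B)$ is a Fr\'echet algebra (under the sup seminorms on compact subpolydiscs), and hence $\GL_n(\sO_X^{\an}(B))$ is a Fr\'echet Lie group whose underlying manifold lies in $\FrM$, as discussed immediately before Proposition \ref{contractibleprop}. The compact finite-dimensional Lie group $U_n \subset \GL_n(\Cx) \subset \GL_n(\sO_X^{\an}(B))$ is embedded via the natural inclusion of constant functions.

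Next, the key input is Proposition \ref{contractibleprop}, which asserts exactly that for $B$ an open polydisc the inclusion $U_n \hookrightarrow \GL_n(\sO_X^{\an}(B))$ admits a smooth $U_n$-equivariant deformation retraction, in the form of a smooth homotopy $h \co G \by \R \to G$ that is equivariant for the left $U_n$-action, is the identity at $t=1$, and lands in $U_n$ at $t=0$. This is precisely the hypothesis required by Proposition \ref{cohoderhamprop0}.

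Applying Proposition \ref{cohoderhamprop0} with $A = \sO_X^{\an}(B)$, $K = U_n$, and $G = \GL_n(\sO_X^{\an}(B))$ therefore yields the canonical zigzag of quasi-isomorphisms
\[
 \CC^{\bt}_{\dif}(\GL_n(\sO_X^{\an}(B)),V) \simeq A^{\bt}(U_n\backslash \GL_n(\sO_X^{\an}(B)), V)^{\GL_n(\sO_X^{\an}(B))},
\]
which is the statement of the corollary. There is no genuine obstacle here; the substantive work was done in proving Proposition \ref{contractibleprop} (the iterated integral construction together with the standard polar-decomposition retract $g \mapsto g(g^{\dagger}g)^{(t-1)/2}$) and in setting up Proposition \ref{cohoderhamprop0} (the extra-degeneracy contracting homotopy together with the averaging argument using Lemma \ref{integratelemma}).
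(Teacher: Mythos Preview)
Your proposal is correct and matches the paper's own proof exactly: the paper's argument is the single sentence ``This just combines Propositions \ref{contractibleprop} and \ref{cohoderhamprop0}'', which is precisely the specialisation you describe.
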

\begin{proof}
 This just combines Propositions \ref{contractibleprop} and \ref{cohoderhamprop0}.
\end{proof}

\begin{definition}
Given a Fr\'echet Lie algebra $\g$, 
a closed Lie subalgebra $\fk \subset \g$ and a  $\fk$-module $V$ in pro-Banach spaces, define the continuous real Lie algebra cohomology complex $E^{\bt}_{\cts,\R}(\g, \fk;V)$ by setting
\[
 E^p_{\cts,\R}(\g, \fk;V):= \Hom_{ \pro(\Ban),\fk}(\hat{\Lambda}^p(\g/\fk), V),
\]
the space of $\fk$-linear morphisms of real pro-Banach spaces, with differential determined by
\[
d\omega(a_0, \ldots, a_p) =\sum_{i<j}(-1)^{i+j}\omega([a_i, a_j], a_0, \ldots,\hat{a}_i, \ldots ,\hat{a}_j , \ldots , a_p).
\]

Define the continuous real Lie algebra homology complex $\hat{E}_{\bt}^{\R}(\g, \fk)$ to be the chain complex of Fr\'echet spaces given by
\[
 \hat{E}_p^{\R}(\g, \fk):= \hat{\Lambda}^p(\g/\fk),
\]
with differential  determined by
\[
 d (a_0 \wedge \ldots \wedge a_p)= \sum_{i<j}(-1)^{i+j} [a_i, a_j]\wedge a_0 \wedge \ldots \wedge \hat{a}_i, \wedge \ldots \wedge \hat{a}_j \wedge \ldots \wedge a_p. 
\]
This has an $\fk$-action induced by those on $\g$ and $\fk$.

Thus $E^{\bt}_{\cts,\R}(\g, \fk;V)= \HHom_{\pro(\Ban), \fk}( \hat{E}_{\bt}^{\R}(\g, \fk),V)$, the subspace of $\fk$-invariants in $\HHom_{\pro(\Ban)}( \hat{E}_{\bt}^{\R}(\g, \fk),V)$.
\end{definition}

\begin{proposition}\label{banLiecor}
 There is a canonical zigzag of  strict quasi-isomorphisms 
\[
\oL\ban (N\R.B\GL_n(\sO_X^{\an}))\simeq \hat{E}_{\bt}^{\R}(\gl_n(\sO_X^{\an}), \fu_n)_{\fu_n}
\]
of presheaves of pro-Banach complexes on $\cB(X_{\an})$,
where $\hat{E}_{\bt}^{\R}(\gl_n(\sO_X^{\an}), \fu_n)_{\fu_n}$ is the quotient of $\hat{E}_{\bt}^{\R}(\gl_n(\sO_X^{\an}), \fu_n)$ given by coinvariants for the  action of $\fu_n$.
\end{proposition}
\begin{proof}
For all open polydiscs $U \subset X$, Proposition \ref{contractibleprop} and its preamble give a  Fr\'echet manifold structure on $U_n\backslash \GL_n(\sO_X^{\an}(U))$. 
The description of the kinematic
 tangent space in the proof of Proposition \ref{cohoderhamprop0} gives isomorphisms
\[
 A^p( U_n\backslash \GL_n(\sO_X^{\an}(U)),V)^{\GL_n(\sO_X^{\an}(U))} \cong \Hom_{ \pro(\Ban)}(\hat{\Lambda}^p(\gl_n(\sO_X^{\an}(U))/\fu_n), V)^{U_n}.
\]
Since $U_n$ is connected, $U_n$-invariants correspond to $\fu_n$-invariants, so  
for all trivial $\fu_n$-modules $V$ we have an isomorphism
\[
A^{\bt}(U_n\backslash \GL_n(\sO_X^{\an}(U)), V)^{\GL_n(\sO_X^{\an}(U))}\cong  E^{\bt}_{\cts, \R}(\gl_n(\sO_X^{\an}(U)), \fu_n;V).
\]

 For all  objects $V$ in $\Ch(\pro(\Ban))$, 
 Corollary \ref{cohoderhamprop} thus gives a canonical zigzag
\[
\xymatrix@R=0ex{ 
\CC^{\bt}_{\dif}(G,V) \ar[r] &  \Tot A^0(EG, A^{\bt}(U_n\backslash G , V))^G\\ \ar[ur] E^{\bt}_{\cts, \R}(\gl_n(\sO_X^{\an}(U)), \fu_n;V)}
\]
of  real  quasi-isomorphisms  for all  open polydiscs $U\subset X_{\an}$, where $G=\GL_n(\sO_X^{\an}(U))$. 

We now observe that the functor
\[
V \mapsto  \Tot A^0(EG, A^{\bt}(M , V))^G
\]
is representable in $\Ch(\pro(\Ban)) $ by some object $P(M)$, where $M= U_n\backslash G$. For this, we appeal to \cite[Proposition A.3.1]{descent}: since the functor preserves finite limits,  
 it suffices to show that for any Banach space $V$ and any $ 
\alpha \in A^0(G^i, A^j(M, V) ),
$
the pair $(V, \alpha)$ is dominated by a minimal pair. In other words, we need to find 
 a minimal closed subspace $W$ of $V$ such that $\alpha \in  A^0(G^i, A^j(M, W) )$. To construct this, we can just take the intersection of all such such subspaces; equivalently,  observe that $\alpha$ is given by a map $G^i \by \Lambda^jTM \to V$, and let $W$ be the closure of the span of the image. 

Since $\CC^{\bt}_{\dif}(G,V)=  \HHom_{\Ban}(\ban N\R.BG,V)$ and $N\R.BG$ is cofibrant, the zigzag above gives maps
\[
 \oL\ban N\R.B\GL_n(\sO_X^{\an}(U)) \la P(U_n\backslash \GL_n(\sO_X^{\an}(U))) \to  \hat{E}_{\bt}^{\R}(\gl_n(\sO_X^{\an}(U)), \fu_n)_{\fu_n}.
\]
These  are strict quasi-isomorphisms, $U$ being a polydisc, since they give quasi-isomorphisms on $\HHom(-,V)$ for all pro-Banach spaces $V$, so in particular for fibrant $V$. 
\end{proof}

\begin{remark}\label{smoothDeligneRmk2}
The proof of Proposition \ref{banLiecor} gives a zigzag of strict quasi-isomorphisms
 \[
\oL\ban N\R.B\GL_n(A)\simeq \hat{E}_{\bt}^{\R}(\gl_n(A), \fk)_{\fk}
\]
for any $A$ as in Proposition \ref{cohoderhamprop0}. By Remarks \ref{smoothDeligneRmk}, this applies in particular to the cases $A= \C^{\infty}(\R^d, \R), \C^{\infty}(\R^d, \Cx)$ with $\fk= \fo_n, \fu_n$.
\end{remark}

\subsection{Cyclic homology and Deligne cohomology}

Write $\GL:= \LLim_n \GL_n$, $\gl= \LLim_n \gl_n$ and $\fu:= \LLim_n \fu_n$, with inclusion map $\gl_n \into \gl_{n+1}$ sending the automorphism $g$ of $A^n$ to the automorphism $g\oplus \id$ of $A^{n+1}=A^n \oplus A$. For a  Fr\'echet algebra $A$, write $ N\R.B\GL(A):= \LLim_n N\R.B\GL_n(A)$ whenever $\GL_n(A)$ is a Fr\'echet manifold.

\begin{definition}\label{plusdef}
 If we fix an isomorphism $\alpha \co \Z^{\infty}\oplus \Z^{\infty} \to \Z^{\infty}$, then for any ring $A$, we have a ring homomorphism $\gl(A) \by \gl(A) \to \gl(A)$ given by $(g,h) \mapsto \alpha\circ (g\oplus h) \circ \alpha^{-1}$. We denote this homomorphism simply by $\oplus$. 
\end{definition}

\begin{lemma}\label{banKGLlemma}
For any real   Fr\'echet algebra $A$ for which the $\GL_n(A)$  are Fr\'echet manifolds,  there is a natural  zigzag of strict quasi-isomorphisms
\[
  \oL \ban(N\R.B\GL(A))   \simeq \bigoplus_{n \ge 0} \oL \ban( \Symm^n \fK_{ >0}(A)). 
\]
Under this equivalence, the map $T$ on the left induced by the group homomorphism $\GL(A) \to \GL(A)$ given by $g \mapsto g\oplus g$ is homotopy equivalent to multiplying each summand $\Symm^n( \fK_{ >0}) $ by $2^n$. 
\end{lemma}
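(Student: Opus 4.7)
The plan is to realise both sides as a Hopf algebra and its Milnor--Moore decomposition into primitives, then pass through $\oL\ban$ levelwise.

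First I would observe that for each $Z \in \FrM$, writing $R := \C^{\infty}(Z,A)$, the direct sum $\oplus$ from Definition \ref{plusdef} is a group homomorphism (up to conjugation by $\alpha$) making $B\GL(R)$ into a homotopy commutative, homotopy associative $H$-space, naturally in $Z$. Hence the chain complex $N\R.B\GL(R)$ carries, up to homotopy, the structure of a connected commutative cocommutative Hopf algebra in $\Ch_\R$, with coproduct given by the diagonal of the nerve and product by $\oplus$.

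Next, using the $+$-construction identification $B\GL(R)^+ \simeq \Omega^\infty K_{>0}(R)$, which is an $\R$-homology isomorphism, I would build a natural map of presheaves in $\Ch_\R(\FrM)$
\[
\fK_{>0}(A) \longrightarrow N\R.B\GL(A)
\]
landing in the primitive subcomplex, and then extend by the product to
\[
\Phi \co \bigoplus_{n\ge 0} \Symm^n \fK_{>0}(A) \longrightarrow N\R.B\GL(A).
\]
The key step is to check that $\Phi(Z)$ is a quasi-isomorphism for every $Z$; this is the classical Milnor--Moore / Cartier structure theorem for connected commutative cocommutative Hopf algebras in characteristic zero, equivalent to the rational equivalence $B\GL(R)^+ \simeq_{\Q} \prod_n K(K_n(R)\ten \R, n)$. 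Once $\Phi$ is a levelwise quasi-isomorphism it is a weak equivalence in the model structure of Proposition \ref{pshfmodelstr}, so applying the left Quillen functor $\oL\ban$ produces the desired strict quasi-isomorphism in $\Ch(\pro(\Ban))$.

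For the second assertion, the group endomorphism $g \mapsto g\oplus g$ factors through the diagonal $\delta \co \GL\to \GL\by\GL$ followed by $\oplus$, so $BT = \mu \circ \Delta$ on $B\GL$, and hence on the Hopf algebra $\H_*(B\GL(R);\R) \cong \Symm(K_{>0,*}(R)_{\R})$ it is the endomorphism $\mu \circ \Delta$. On a primitive element $x$ we have $\Delta(x) = x\ten 1 + 1\ten x$, hence $\mu\Delta(x) = 2x$; on $x_1\cdots x_n \in \Symm^n$ the shuffle coproduct gives $\Delta(x_1\cdots x_n) = \sum_{I\subset[n]} x_I \ten x_{I^c}$, so $\mu\Delta(x_1\cdots x_n) = 2^n x_1 \cdots x_n$. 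Therefore $T$ acts as multiplication by $2^n$ on the summand $\Symm^n \fK_{>0}(A)$.

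The main technical obstacle is producing the map $\fK_{>0}(A) \to N\R.B\GL(A)$ at the chain level in a functorial manner on $\FrM$, since $\fK_{>0}$ is a spectrum-level construction (Definition \ref{KBandef}) whereas $N\R.B\GL$ is a space-level construction; a clean approach is to use the unit of the $(\Sigma^{\infty}_+,\Omega^{\infty})$-adjunction applied to $B\GL(R)$, composed with the canonical comparison between the reduced chains on $\Omega^\infty K_{>0}(R)$ and on $B\GL(R)^+$, and to produce $\Phi$ itself as a zigzag rather than a single map — which is sufficient, since $\oL\ban$ inverts levelwise quasi-isomorphisms.
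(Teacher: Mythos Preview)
Your proposal is correct and follows essentially the same line as the paper: both identify $N\R.B\GL(\C^{\infty}(Z,A))\simeq \Symm \fK_{>0}(A)(Z)$ levelwise via the $+$-construction and Milnor--Moore, then apply $\oL\ban$, and both compute $T=\mu\circ\Delta$ on the Hopf algebra to obtain the $2^n$ action on $\Symm^n$. Your closing remark that $\Phi$ should be produced as a zigzag rather than a single chain map is exactly how the paper handles it as well.
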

\begin{proof}
We know that for any ring $B$, the chain complex $N\R.B\GL(B)= N\CC_{\bt}( B\GL(B), \R) $ is canonically quasi-isomorphic to $ N\CC_{\bt}( B\GL(B)^+, \R)$, by the defining property of the plus construction. For any differential graded Lie algebra $\g$, the loop object has an abelian model:
\[
 \g \by_{ (\g \by \g)}^h 0 \simeq \g[x,dx]\by_{(\ev_0,\ev_1),(\g \by \g)} 0 \xla{\sim} \g.dx,
\]
for $x$ of degree $0$ and $dx\wedge dx=0$. Since $B\GL(B)^+ \simeq K_{>0}(B) $ is a loop space, its Quillen rational  homotopy type is thus represented by the abelian dg Lie algebra  $K_{>0}(B)_{\Q}[1]$, 
giving a quasi-isomorphism between  $ N\CC_{\bt}( B\GL(B)^+, \Q)$ and $ \Symm_{\Q}(K_{>0}(B)_{\Q})$  (as \cite[Theorems I and II]{QRat} naturally extend to nilpotent spaces), so 
\[
 N\R.B\GL(\C^{\infty}(Z,A)) \simeq \Symm_{\R}(\fK_{>0}(A)(Z)) 
\]
for all $Z\in \FrM$. 

There are a multiplication and comultiplication on $ N\R.B\GL(A)$, induced by $\oplus$ and the diagonal on $B\GL(\C^{\infty}(Z,A))$ respectively. The multiplication is homotopy equivalent to the multiplication map on $\Symm(\fK_{>0}(A)(Z))$ regarded as a free commutative dg algebra. The comultiplication map is then homotopy equivalent to the ring homomorphism given on generators by the diagonal $ \fK_{>0}\to \fK_{>0}\oplus \fK_{>0}$. The doubling map $T$ is given by composing the multiplication and comultiplication, so is homotopy equivalent to the ring homomorphism on $ \Symm\fK_{>0}(A)$ which multiplies $\fK_{>0} $ by $2$.
 We now just apply the functor $\oL\ban$.
\end{proof}

\begin{proposition}\label{LieSymmKprop}
There is a canonical zigzag of local strict quasi-isomorphisms
\[
 \hat{E}_{\bt}^{\R}(\gl(\sO_X^{\an}), \fu)_{\fu}\simeq \bigoplus_{n \ge 0} \oL \ban( \Symm^n \fK_{ >0}(\sO_X^{\an})) 
\]
of presheaves of pro-Banach complexes on $\cB(X_{\an})$.
Under this equivalence, the map on the left induced by the ring homomorphism $T\co \gl(\sO_X^{\an}(U)) \to \gl(\sO_X^{\an}(U))$ given by $g \mapsto g\oplus g$ is homotopy equivalent to the map multiplying each summand 
$ \oL\ban \Symm^n\fK_{ >0}(\sO_X^{\an}(U)) $
by $2^n$.
\end{proposition}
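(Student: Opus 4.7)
The plan is to combine Corollary~\ref{banLiecor} (which handles each $\GL_n$) with Lemma~\ref{banKGLlemma} (which gives the symmetric-algebra decomposition of $\oL\ban N\R.B\GL(A)$), after passing to the filtered colimit over $n$.

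First, I observe that the functors involved commute with the relevant colimits. Since $N\R.B\GL(\sO_X^{\an}) = \LLim_n N\R.B\GL_n(\sO_X^{\an})$ as a presheaf, and since $\oL\ban$ is derived from a left adjoint (see the proof of Proposition~\ref{Smprop}) and so preserves filtered colimits, we have a natural identification
\[
 \oL\ban N\R.B\GL(\sO_X^{\an}) \simeq \LLim_n \oL\ban N\R.B\GL_n(\sO_X^{\an}).
\]
On the Lie algebra side, $\hat{\Lambda}^p$ and coinvariants are compatible with the filtered colimits $\gl = \LLim_n \gl_n$ and $\fu = \LLim_n \fu_n$, giving
\[
 \hat{E}_{\bt}^{\R}(\gl(\sO_X^{\an}), \fu)_{\fu} \simeq \LLim_n \hat{E}_{\bt}^{\R}(\gl_n(\sO_X^{\an}), \fu_n)_{\fu_n}.
\]
The zigzags of Corollary~\ref{banLiecor} are natural with respect to the inclusions $\GL_n \into \GL_{n+1}$ (given by $g \mapsto g \oplus \id$), so taking the colimit yields a zigzag of local strict quasi-isomorphisms between $\oL\ban N\R.B\GL(\sO_X^{\an})$ and $\hat{E}_{\bt}^{\R}(\gl(\sO_X^{\an}), \fu)_{\fu}$ of presheaves on $X_{\an}$. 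Local strict quasi-isomorphism is preserved in this limit by Lemma~\ref{localstrictlemma}, since on each polydisc $B$ the sectionwise filtered colimit of strict quasi-isomorphisms in $\Ch(\pro(\Ban))$ is again a strict quasi-isomorphism.

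Next, I apply Lemma~\ref{banKGLlemma} with $A = \sO_X^{\an}(U)$ for each open $U$. Naturality in $A$ promotes the resulting zigzag of strict quasi-isomorphisms to a zigzag at the level of presheaves, giving a local strict quasi-isomorphism
\[
 \oL\ban N\R.B\GL(\sO_X^{\an}) \simeq \bigoplus_{n \ge 0} \oL\ban(\Symm^n \fK_{>0}(\sO_X^{\an})).
\]
Composing with the zigzag from the previous step yields the asserted local strict quasi-isomorphism between $\hat{E}_{\bt}^{\R}(\gl(\sO_X^{\an}), \fu)_{\fu}$ and $\bigoplus_n \oL\ban(\Symm^n \fK_{>0}(\sO_X^{\an}))$.

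For the compatibility with $T$: the map $T\co \gl \to \gl$ arising from $g \mapsto g \oplus g$ is induced by a ring endomorphism, so it is respected by every stage of the construction (Lie algebra chains, $N\R.B\GL$, and $\oL\ban$). Thus the zigzag from Corollary~\ref{banLiecor} intertwines the two versions of $T$, and Lemma~\ref{banKGLlemma} already computes that under the identification with $\bigoplus_n \oL\ban(\Symm^n \fK_{>0})$ the map $T$ is homotopic to multiplication by $2^n$ on the $n$-th summand. The main obstacle is simply the bookkeeping around the colimit: one must verify that $\oL\ban$, coinvariants and $\hat{\Lambda}$ commute with the filtered colimit in $n$, and that local strict quasi-isomorphisms are stable under this colimit; both follow from the left-adjoint nature of these constructions together with Lemma~\ref{localstrictlemma}.
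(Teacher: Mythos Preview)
Your proof is correct and follows essentially the same approach as the paper: take the colimit over $n$ in Corollary~\ref{banLiecor} to obtain $\oL\ban N\R.B\GL(\sO_X^{\an})\simeq \hat{E}_{\bt}^{\R}(\gl(\sO_X^{\an}),\fu)_{\fu}$, then combine with Lemma~\ref{banKGLlemma}. You supply more justification for the colimit step than the paper does, and your treatment of $T$ via naturality is equivalent to the paper's tracing of $T$ through the differentiable cohomology and symmetric space identifications.
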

\begin{proof}
Taking the colimit over $n$ in Proposition \ref{banLiecor},  we have local strict quasi-isomorphisms
\[
  \oL\ban N\R.B\GL(\sO_X^{\an})\simeq \hat{E}_{\bt}^{\R}(\gl(\sO_X^{\an}), \fu)_{\fu},
\]
which combine with Lemma \ref{banKGLlemma} to give the required quasi-isomorphisms.

To understand the effect of the ring homomorphism $T$, we just observe that this is the map induced on $ \hat{E}_{\bt}^{\R}(\gl(\sO_X^{\an}(U)), \fu)_{\fu}$ by $T^*$ on $A^{\bt}(U_{\infty} \backslash \GL(\sO_X^{\an}(U)),V)$. This in turn is induced by the map on differentiable group cohomology coming from the homomorphism $T$ on $\GL(\sO_X^{\an}(U))$, which by Lemma \ref{banKGLlemma} corresponds to multiplying  
$\oL\ban \Symm^n\fK_{ >0}(\sO_X^{\an}(U))$
by $2^n$.
\end{proof}

\begin{remark}\label{smoothDeligneRmk3}
For any real Fr\'echet algebra $A$ with a compatible system $K_n \subset \GL_n(A)$ of retracts as  in Proposition \ref{cohoderhamprop0}, the proof of Proposition \ref{LieSymmKprop} combines with Remark \ref{smoothDeligneRmk2} to  give a canonical zigzag of  strict quasi-isomorphisms
\[
 \hat{E}_{\bt}^{\R}(\gl(A), \fk)_{\fk}\simeq \bigoplus_{n \ge 0} \oL \ban( \Symm^n \fK_{ >0}(A)), 
\]
where $\fk := \LLim_n \fk_n$. 
\end{remark}

Now, the direct sum $\bigoplus \co \gl \by \gl \to \gl$ of Definition \ref{plusdef} induces a multiplication $\hat{\Lambda}^i\gl(\sO_X^{\an})\hten\hat{\Lambda}^j\gl(\sO_X^{\an})\to \hat{\Lambda}^{i+j}\gl(\sO_X^{\an})$, and as observed in  \cite[(6.5)]{LodayQuillen}, this becomes associative on taking $\gl(\R)$-coinvariants. 
  It is also associative on $\fu$-coinvariants, because $\fu_{2n}$ contains matrices of the form $\left(\begin{smallmatrix} 0 & I \\ -I & 0                                                                                                                    \end{smallmatrix}\right)$,  so $\hat{E}_{\bt}^{\R}(\gl(\sO_X^{\an},\fu))_{\fu}$ becomes a commutative associative dg Fr\'echet algebra.

\begin{definition}
Given a Fr\'echet $k$-algebra $A$ (for $k\in \{\R,\Cx\}$) we define the  continuous cyclic homology complex $\hat{\CC}_{\bt}(A/k)$  by
\[
 \hat{\CC}_n(A/k) := (A^{\hten_k n+1})/(1-t),
\]
where $t(a_0\ten \ldots \ten a_n)= (-1)^n( a_n \ten a_0\ten \ldots,\ten a_{n-1} )$. The differential $b \co\hat{\CC}_n(A/k)\to \hat{\CC}_{n-1}(A/k)$ is given by 
\[
 b(a_0\ten \ldots \ten a_n)=\sum_{i=0}^{n-1}(-1)^i(a_0\ten \ldots\ten a_ia_{i+1}\ten\ldots  \ten a_n + (-1)^n(a_n  a_0\ten \ldots,\ten a_{n-1}).
\]
\end{definition}

\begin{lemma}\label{LieCClemma0}
For a  Fr\'echet $k$-algebra $A$, 
there is a canonical isomorphism
\[
 \hat{E}_{\bt}^{k}(\gl(A))_{\gl(k)} \cong \bigoplus_{n \ge 0} \hat{\Symm}^n_k(\hat{\CC}_{\bt}(A/k)[-1]),
\]
with multiplication given by tensoring symmetric powers. 
\end{lemma}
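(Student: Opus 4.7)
The plan is to adapt the classical Loday--Quillen theorem \cite{LodayQuillen} to our continuous Fréchet setting. The key algebraic input is Weyl's first fundamental theorem of invariants for $\GL_N(k)$: in the stable range $N \ge n$, the $\gl_N(k)$-coinvariants of the adjoint representation on $M_N(k)^{\otimes n}$ are isomorphic to $k[\Sigma_n]$ with conjugation action, via the trace polynomials indexed by conjugacy classes in $\Sigma_n$. Since this involves only the finite-dimensional factor $M_N(k)^{\otimes n}$, the Fréchet coefficient $A^{\hten n}$ enters trivially, and the projective tensor product $M_N(k)^{\otimes n} \hten_k A^{\hten n}$ coincides with the ordinary algebraic tensor product in the matrix slot.

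Concretely, I would first write $\gl_N(A) = M_N(k) \otimes_k A$ and unpack $\hat\Lambda^n \gl_N(A) = (M_N(k)^{\otimes n} \hten_k A^{\hten n}) \otimes_{k[\Sigma_n]} \sgn$, with $\Sigma_n$ acting by simultaneous permutation of tensor factors. Taking $\gl_N(k)$-coinvariants and passing to the stable limit $N \to \infty$ would then yield
\[
 (\hat\Lambda^n \gl(A))_{\gl(k)} \cong (k[\Sigma_n] \hten A^{\hten n}) \otimes_{k[\Sigma_n]} \sgn,
\]
with $\Sigma_n$ acting on $k[\Sigma_n]$ by conjugation and on $A^{\hten n}$ by permutation. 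Next, decomposing by cycle type, a permutation with cycle lengths $(m_1, \ldots, m_r)$ contributes $\sgn(\sigma)\,\bigotimes_i \hat\CC_{m_i - 1}(A/k)$: the $\sigma$-cyclic action on each $m_i$-block produces exactly the $(1-t)$-coinvariants, the centraliser of $\sigma$ permutes equal-length cycles as a wreath product, and the sign $\sgn(\sigma) = \prod_i (-1)^{m_i-1}$ is absorbed into the Koszul shift $[-1]$ per cycle. Summing over partitions yields $\bigoplus_n \hat\Symm^n_k(\hat\CC_\bt(A/k)[-1])$. Finally, I would check that the Lie differential on each cycle factor reduces to the Hochschild boundary $b$ by the matrix calculation of \cite[\S 10.2]{LodayQuillen}, extending as a derivation across cycle products; and that the $\oplus$-induced multiplication corresponds under cycle decomposition to disjoint union of cycle multisets, matching the symmetric-algebra product on $\hat\Symm^\bt_k$.

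The main obstacle is the combinatorial bookkeeping ensuring that the centraliser action combined with the $\sgn$-twist on the exterior power yields exactly symmetric (not signed-symmetric) powers of the shifted cyclic complex — this is the same identity underlying the purely algebraic Loday--Quillen theorem, but must be tracked carefully through the Koszul sign rules. Passage to the Fréchet setting itself is routine: for each fixed $n$ the decomposition lives in the finite-dimensional matrix factor $M_N(k)^{\otimes n}$, and filtered colimits $\LLim_N$ commute with projective tensor products against such finite-dimensional spaces, so both the invariant theory and the stabilisation survive completion, and compatibility with the Lie differential and $\oplus$-product is immediate from the inclusions $\gl_N \into \gl_{N+1}$.
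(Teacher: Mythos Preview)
Your proposal is correct and follows essentially the same route as the paper: both adapt Loday--Quillen via the key isomorphism $(\hat\Lambda^n\gl(A))_{\gl(k)}\cong (k[\Sigma_n]\hten A^{\hten n})\ten_{k[\Sigma_n]}\sgn$, decompose by cycle type to extract the cyclic complex as generators, and identify the differential with $b$. Your write-up is in fact more explicit than the paper's sketch, spelling out the Weyl invariant-theoretic source of the isomorphism and why the Fr\'echet completion causes no difficulty.
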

\begin{proof}
 We adapt the  proof of \cite[Proposition 6.6]{LodayQuillen}. 
There is an isomorphism
\[
 (\hat{\Lambda}^n\gl(A))_{\gl(k)} \cong (k[\Sigma_n]\ten A^{{\hten} n})\ten_{k[\Sigma_n]}\sgn,
\]
where $\sgn$ is the one-dimensional real vector space on which $\Sigma_n$ acts by the signature, and $\Sigma_n$ acts on itself by conjugation. Multiplication is given by $\Sigma_p \by \Sigma_q \to \Sigma_{p+q}$. Decomposing $\Sigma_n$ into conjugacy classes (i.e. cycle types),  it then  follows that  $\hat{E}_{\bt}(\gl(A))_{\gl(\R)}$ is freely generated as a graded-commutative pro-Banach algebra by the image of  $ \bigoplus_n ( W_n\ten A^{\hten n})\ten_{\R[\Sigma_n]}\sgn[-n]$, where $W_n$ denotes the conjugacy class of cyclic permutations. 

For the cyclic group $C_n$, we now just have
\[
 ( W_n\ten A^{\hten n})\ten_{k[\Sigma_n]}\sgn \cong A^{\hten n}\ten_{k[C_n]}\sgn= \hat{\CC}_{n}(A/k)[-1],
\]
and the differential is $b$.
\end{proof}

\begin{lemma}\label{LieCClemma}
 For any complex Fr\'echet algebra $A$, there is  a zigzag of strict quasi-isomorphisms 
\[
 \hat{E}_{\bt}^{\R}(\gl(A), \fu)_{\fu} \simeq \bigoplus_{n \ge 0} \hat{\Symm}^n_{\R}(\hat{\CC}_{\bt}(A/\Cx)[-1]/\bigoplus_{p>0} \R(p)[1-2p]),
\]
where the map $\R(p) \to \hat{\CC}_{2p-2}(A/\Cx)$ is given by $a \mapsto a\ten 1\ten \ldots \ten 1$, for $\R(p)= (2\pi i)^p \R$. 
\end{lemma}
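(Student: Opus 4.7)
The strategy is to adapt the Loday--Quillen calculation of Lemma \ref{LieCClemma0} to the relative real setting $(\gl(A),\fu)$, and then identify the Tate kernel $\R(p)=(2\pi i)^p\R$ in each odd degree.

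First, I would verify that $\hat{E}_\bullet^\R(\gl(A),\fu)_\fu$ carries the structure of a graded-commutative dg pro-Banach Hopf algebra over $\R$: the block-diagonal embedding $\gl(A)\oplus\gl(A)\to\gl(A)$ restricts to $\fu\oplus\fu\to\fu$, so descends to a commutative product on $\fu$-coinvariants (the same mechanism making $\hat{E}_\bullet^\R(\gl(A),\fu)_\fu$ a commutative dg algebra, as observed before Lemma \ref{LieCClemma0}), while the diagonal $\gl(A)\to\gl(A)\oplus\gl(A)$ induces the cocommutative coproduct via the K\"unneth splitting for $(\fu\oplus\fu)$-coinvariants. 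A Milnor--Moore-type theorem for graded-commutative Hopf algebras in characteristic zero then reduces the problem to identifying the primitives $P_\bullet$ as a complex of real pro-Banach spaces; one will then have $\hat{E}_\bullet^\R(\gl(A),\fu)_\fu\simeq\hat{\Symm}_\R(P_\bullet)$.

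Next, I would identify $P_\bullet$ by running the cycle-type decomposition of coinvariants from the proof of Lemma \ref{LieCClemma0}. As $n\to\infty$, $(\hat{\Lambda}^p_\R(\gl(A)/\fu))_\fu$ decomposes by conjugacy class in $\Sigma_p$, and only cyclic permutations contribute to the primitive part, producing a natural map $\hat{\CC}_\bullet(A/\Cx)[-1]\to P_\bullet$. Decomposing $\gl(A)/\fu$ as a real $\fu$-module into the augmentation piece coming from $\gl\otimes_\Cx(A/\Cx)$ (where the complex Loday--Quillen argument of Lemma \ref{LieCClemma0} applies verbatim, and the map is injective) and the compact-quotient piece $\gl(\Cx)/\fu\cong i\fu$ (Cartan decomposition), one sees that the kernel is concentrated in the constant part $\Cx=\hat{\CC}_{2p-2}(\Cx/\Cx)$ in each odd degree $2p-1$. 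Chevalley--Eilenberg applied to the compact real form $\fu$ shows that $\Lambda^{2p-1}_\R(i\fu)^\fu$ stably has real dimension one, so the induced map $\Cx\to P_{2p-1}$ has a one-dimensional real kernel.

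The principal obstacle is then to show that this kernel is precisely $\R(p)=(2\pi i)^p\R\subset\Cx$. This Tate normalisation is a Chern--Weil calculation: the primitive in $\Lambda^{2p-1}_\R(\fu_n)^{\fu_n}\cong H^{2p-1}(U_n;\R)$ transgresses to the universal Chern class $c_p\in H^{2p}(BU_n;\R)$, whose Chern--Weil representative $(2\pi i)^{-p}\,\mathrm{tr}(\Omega^p)/p!$ differs from the naive trace cycle $1\otimes\cdots\otimes 1\in\hat{\CC}_{2p-2}(\Cx/\Cx)$ by the factor $(2\pi i)^p$. Testing on a universal example (for instance the generator $z\in\sO(\Cx^\times)^\times$ of $K_1$, whose logarithmic differential integrates to $2\pi i$ against a loop) confirms the exponent. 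Once this normalisation is pinned down, the Milnor--Moore argument closes the proof and yields the claimed zigzag of strict quasi-isomorphisms.
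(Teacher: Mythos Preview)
Your approach differs from the paper's in a way that leaves a real gap. The paper does not attempt to run the Loday--Quillen cycle-type decomposition directly for $\fu$-coinvariants; instead it \emph{complexifies}. The key observation is that $\fu_n\ten_{\R}\Cx\cong\gl_n(\Cx)$ and $\gl_n(A)\ten_{\R}\Cx\cong\gl_n(A)\times\gl_n(\bar{A})$, with the inclusion becoming the diagonal $\gl_n(\Cx)\to\gl_n(A)\times\gl_n(\bar{A})$ and complex conjugation $\tau$ acting explicitly. One then writes
\[
\hat{E}_{\bt}^{\R}(\gl(A),\fu)_{\fu}\ten_{\R}\Cx\;\cong\;\hat{E}_{\bt}^{\Cx}(\gl(A)\times\gl(\bar{A}))_{\gl(\Cx)}\ten_{E_{\bt}^{\Cx}(\gl(\Cx))_{\gl(\Cx)}}\Cx,
\]
applies Lemma~\ref{LieCClemma0} to both factors, and recovers the real answer as $\tau$-invariants. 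Your step ``run the cycle-type decomposition of coinvariants from the proof of Lemma~\ref{LieCClemma0}'' for $(\hat{\Lambda}^p_{\R}(\gl(A)/\fu))_{\fu}$ is exactly the place where this is needed: the isomorphism $(\hat{\Lambda}^n\gl(A))_{\gl(k)}\cong(k[\Sigma_n]\ten A^{\hten n})\ten_{k[\Sigma_n]}\sgn$ is invariant theory for $\gl$, and there is no analogous one-line statement for the real form $\fu$ acting on the \emph{real} vector space $\gl(A)/\fu$ without first passing to $\Cx$. Your proposed splitting of $\gl(A)/\fu$ into an ``augmentation piece'' $\gl\ten_{\Cx}(A/\Cx)$ and $i\fu$ also implicitly assumes an augmentation $A\to\Cx$, which is not in the hypotheses.

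Your identification of the Tate twist via Chern--Weil and transgression is also considerably heavier than what the paper does. After complexification, $E_{\bt}^{\Cx}(\gl(\Cx))_{\gl(\Cx)}$ is $\hat{\Symm}_{\Cx}(\bigoplus_{p>0}\Cx[1-2p])$ by Lemma~\ref{LieCClemma0}, and one only needs to find the $\tau$-fixed line in each $\Cx[1-2p]$. The paper computes $\dagger$ on $\Cx[\Sigma_n]$ directly: since the cyclic permutation $(1\,2\,\cdots\,2p{-}1)$ is conjugate to its inverse by a permutation of sign $(-1)^{p+1}$, the fixed condition is $\bar{a}=(-1)^{p}a$, i.e.\ $a\in\R(p)$. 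No Chern classes, transgression, or test on $\sO(\Cx^{\times})^{\times}$ are required. Finally, the Hopf-algebra/Milnor--Moore layer you introduce is unnecessary here: once the complexified computation is done, the paper obtains an actual isomorphism of dg algebras, not merely an identification of primitives.
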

\begin{proof}
Write $\tau \in \Gal(\Cx/\R)$ for the complex conjugation element, and $\bar{A}$ for the Fr\'echet algebra $A$ equipped with the ring homomorphism $\tau\co \Cx\to A$.  Then  $\fu_n\ten_{\R}\Cx \cong \gl_n(\Cx)$ and $\gl_n(A) \ten_{\R}\Cx \cong  \gl_n(A)\by \gl_n(\bar{A})$ under the isomorphisms $u \ten z \mapsto uz$ and  $g\ten z \mapsto (gz, -g^t \bar{z})$, for $u \in \fu_n$, $g \in \gl_n(A)$, $z \in \Cx$.

Under these isomorphisms, the inclusion $\fu_n\ten_{\R}\Cx \into \gl_n(A) \ten_{\R}\Cx$ of Fr\'echet Lie algebras corresponds to the  diagonal map
 $\gl_n(\Cx) \to \gl_n(A)\by \gl_n(\bar{A})$; 
  the action of $\tau$ is given on $\gl_n(\Cx)$ by $h \mapsto   
- h^{\dagger}$, and on $\gl_n(A)\by \gl_n(\bar{A})$ by $(g_1, g_2) \mapsto (-g_2^t, -g_1^t)$.

Observe that 
\[
\hat{E}_{\bt}^{\R}(\gl(A), \fu)_{\fu} = \hat{E}_{\bt}^{\R}(\gl(A))_{\fu}\ten_{E^{\R}_{\bt}(\fu)_{\fu}}\R,
\]
so the isomorphisms above give
\[
 \hat{E}_{\bt}^{\R}(\gl(A), \fu)_{\fu}\ten_{\R}\Cx \cong \hat{E}_{\bt}^{\Cx}(\gl(A) \by \gl(\bar{A}))_{\gl(\Cx)}\ten_{E^{\Cx}_{\bt}(\gl(\Cx))_{\gl(\Cx)}}\Cx.
\]

 The calculations of Lemma \ref{LieCClemma0} give $E^{\Cx}_{\bt}(\gl(\Cx))_{\gl(\Cx)}$ as $\hat{\Symm}_{\Cx}$ applied to  $\hat{\CC}_{\bt}(\Cx/\Cx)[-1]= \bigoplus_{p>0} \Cx[1-2p]$. Now we  need to understand the effect of the operator $\dagger$ on this complex. The action of $\Sigma_n$ on $V^{\ten_{\Cx} n}$ is unitary, so we have $(\sum_{\sigma} \lambda_{\sigma}\sigma)^{\dagger}= \sum_{\sigma} \bar{\lambda}_{\sigma}\sigma^{-1}$. Now, the cyclic permutation $(1,2,\ldots ,(2p-1))$ is conjugate to its inverse by a permutation of sign $(-1)^{p+1}$, so generators of $E^{\R}_{\bt}(\fu)_{\fu} $ consist of elements $a$ in $\bigoplus_{p>0} \Cx[1-2p] $ with $\bar{a}= (-1)^pa$, hence $\bigoplus_{p>0} \R(p)[1-2p]$. 

Similarly, Lemma \ref{LieCClemma0} gives $\hat{E}_{\bt}^{\Cx}(\gl(A) \by \gl(\bar{A}))_{\gl(\Cx)}$ as  $\hat{\Symm}_{\Cx}\hat{\CC}_{\bt}(A \by \bar{A}/\Cx)[-1]$. The description of $\tau$ above also gives  isomorphisms 
\begin{align*}
 \hat{\CC}_{\bt}(A/\Cx)^2 &\xra{(\id, \tau)} \hat{\CC}_{\bt}(A \by \bar{A}/\Cx),\\
\hat{\CC}_{\bt}(A/\Cx) &\xra{(\id+ \tau)} \hat{\CC}_{\bt}(A \by \bar{A}/\Cx)^{\tau},
\end{align*}
and hence an isomorphism between $\hat{E}_{\bt}^{\R}(\gl(A))_{\fu}$ and  $\hat{\Symm}_{\R}\hat{\CC}_{\bt}(A/\Cx)[-1]$.

Putting these calculations together gives
\[
\hat{E}_{\bt}^{\R}(\gl(A), \fu)_{\fu} \cong \bigoplus_{n \ge 0} \hat{\Symm}^n_{\R}(\hat{\CC}_{\bt-1}(A/\Cx)/\bigoplus_{p>0} \R(p)[1-2p]).
\]
\end{proof}


\begin{theorem}\label{mainthm0}
For a complex manifold $X_{\an}$, there exists a zigzag of  strict quasi-isomorphisms 
\[
K_{\Ban, >0}(\sO_X^{\an}(U)) \simeq \hat{\CC}_{\bt}( \sO_X^{\an}(U)/\Cx)[-1]/\bigoplus_{p>0} \R(p)[1-2p],
\]
functorial in polydiscs $U \in \cB(X_{\an})$.
\end{theorem}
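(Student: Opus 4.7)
The strategy is to isolate the $n=1$ summand from the two decompositions of the common complex $\hat{E}_{\bt}^{\R}(\gl(\sO_X^{\an}(U)), \fu)_{\fu}$ given by Proposition \ref{LieSymmKprop} and Lemma \ref{LieCClemma}. The $n=1$ summand on the $K$-theory side is $\oL\ban \fK_{>0}(\sO_X^{\an}(U)) = K_{\Ban,>0}(\sO_X^{\an}(U))$, and on the cyclic-homology side it is precisely $\hat{\CC}_{\bt}(\sO_X^{\an}(U)/\Cx)[-1]/\bigoplus_{p>0} \R(p)[1-2p]$, so identifying these summands is exactly the content of the theorem. The doubling endomorphism $T$ induced by $g \mapsto g \oplus g$ on $\gl$ serves as the discriminator: by Proposition \ref{LieSymmKprop} it acts as multiplication by $2^n$ on the $n$-th summand of the $K$-theory side, and an analogous statement must be shown on the cyclic-homology side.

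Concretely, for $U \in \cB(X_{\an})$, my first step is to combine the equivalences: the proof of Proposition \ref{LieSymmKprop} yields strict quasi-isomorphisms on polydiscs via Corollary \ref{banLiecor}, and Lemma \ref{LieCClemma} applied to the Fr\'echet $\Cx$-algebra $A = \sO_X^{\an}(U)$ is already a strict quasi-isomorphism, so concatenating them produces a zigzag of strict quasi-isomorphisms
\[
 \bigoplus_{n \ge 0} \oL\ban(\Symm^n \fK_{>0}(\sO_X^{\an}(U))) \simeq \bigoplus_{n \ge 0} \hat{\Symm}^n_{\R}\bigl( \hat{\CC}_{\bt}(\sO_X^{\an}(U)/\Cx)[-1]/\bigoplus_{p>0} \R(p)[1-2p]\bigr).
\]
My second step is to verify the action of $T$: from the proof of Lemma \ref{LieCClemma0} one reads off that $\hat{E}_{\bt}^{\R}(\gl(A), \fu)_{\fu}$ is a free graded-commutative pro-Banach algebra with multiplication $m$ induced by $\oplus \co \gl \oplus \gl \to \gl$ and comultiplication $\Delta$ induced by the diagonal, so $T = \oplus \circ \mathrm{diag}$ acts on $\hat{E}_{\bt}^{\R}(\gl(A), \fu)_{\fu}$ as $m \circ \Delta$, which multiplies $\hat{\Symm}^n$ of primitives by $2^n$. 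Third, since the eigenvalues $\{2^n\}_{n \ge 0}$ are distinct and the direct-sum decompositions on both sides coincide with the $T$-eigenspace decomposition, the zigzag above is forced to respect the decompositions summand-by-summand up to homotopy; restricting to $n=1$ yields the desired strict quasi-isomorphism, and functoriality in $U$ is automatic from the naturality of each step.

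The main obstacle is the verification of the $T$-action on the cyclic-homology side in step two, since Lemma \ref{LieCClemma} does not explicitly track $T$ and its proof passes through the complexification $A \to A \by \bar{A}$. One needs to check that the multiplicative and comultiplicative structures on $\hat{E}_{\bt}^{\R}(\gl, \fu)_{\fu}$ arising from $\oplus$ and the diagonal correspond, under the isomorphism of Lemma \ref{LieCClemma}, to the free graded-commutative algebra structure on the right-hand side, which reduces to inspecting the explicit symmetrizations of Lemma \ref{LieCClemma0}. A secondary technical point is making the summand-by-summand matching rigorous; this can be done either by a functorial idempotent built from $T$ on truncated direct sums, or more cleanly by arranging for the combined zigzag to be $T$-equivariant on the nose, using the manifest $\oplus$-naturality of all the explicit identifications involved.
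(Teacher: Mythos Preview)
Your approach is essentially the paper's: combine Proposition \ref{LieSymmKprop} with Lemma \ref{LieCClemma} and use the doubling map $T$ (acting as $2^n$ on degree $n$) to isolate the $n=1$ summand. The paper resolves your ``secondary technical point'' by passing to the class $f \in \H^0\oR\HHom_{\Ban,\cB(X_{\an})}(\sL,\sS)$ represented by the zigzag, writing $f = f_{11} + f_{1,\ne 1} + f_{\ne 1,1} + f_{\ne 1,\ne 1}$ via the inclusions and projections of the two splittings, and using that $(T-2)$ is homotopic to zero on degree $1$ and invertible on its complement (on both sides) to kill the off-diagonal pieces, whence $f_{11}$ is a strict quasi-isomorphism functorial in $U$.
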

\begin{proof}
Combining Lemma \ref{LieCClemma} with Proposition \ref{LieSymmKprop} gives a  zigzag of  strict quasi-isomorphisms
\begin{align*}
 \sL&:=\bigoplus_{m \ge 0}  \oL \ban( \Symm^m_{\R} \fK_{ >0}(\sO_X^{\an}))\\
&\simeq  \bigoplus_{n \ge 0} \hat{\Symm}^n_{\R}(\hat{\CC}_{\bt}(\sO_X^{\an}/\Cx)[-1]/\bigoplus_{p>0} \R(p)[1-2p])=:\sS,
\end{align*}
and we wish to infer that the primitive elements on each side are strictly quasi-isomorphic.

Possibly the most natural way to approach this would be to consider the $\Gamma$-space of summing functors on the monoidal Lie groupoid $(B\GL(\sO_X^{\an}), \oplus)$, and to  extend the strict quasi-isomorphisms above to $\Gamma$-diagrams, noting that all the comparison results extend functorially to finite products of copies of $\GL$. 

Instead, we will deduce existence of the desired strict quasi-isomorphism by considering the effect of the doubling map $T$ from Proposition \ref{LieSymmKprop}. The zigzag above gives a pair of inverses
\[
 (f,g) \in \H^0\oR\HHom_{\Ban,\cB(X_{\an})}(\sL,\sS)\by \H^0\oR\HHom_{\Ban,\cB(X_{\an})}(\sS,\sL). 
\]
Writing $\sL=\bigoplus_m \sL^m$ and $\sS= \bigoplus_n \sS^n$,  
we can compose $f$ with inclusion and projection to obtain a decomposition
\[
 f= f_{11}+ f_{1,\ne 1} + f_{\ne 1,1} + f_{\ne 1, \ne 1} 
\]
with $f_{ij}\in \H^0\oR\HHom_{\Ban,\cB(X_{\an})}(\sL^i,\sS^j)$; we
 wish to show  that $f_{11}$ is invertible.

By Proposition \ref{LieSymmKprop}, we know that the action of $(T-2)$ on $\sL^1$ is objectwise homotopic to $0$, while the action on $\sL^{\ne 1}$ is a quasi-isomorphism.  The description of multiplication in Lemma \ref{LieCClemma} gives the corresponding results for $\sS^1$ and $\sS^{\ne 1}$. Since $f$ commutes with $T-2$, we deduce that the terms $f_{1,\ne 1},f_{\ne 1,1}$ are objectwise homotopic to $0$. Thus $f_{11} + f_{\ne 1, \ne 1}$ must be a strict quasi-isomorphism (being objectwise homotopic to $f$), and so must its direct summand $f_{11}$.   
\end{proof}

\begin{remark}\label{smoothDeligneRmk4}
For any complex  Fr\'echet algebra $A$ for which the spaces $\GL_n(A)$ are Fr\'echet manifolds and the inclusions  $U_n \subset \GL_n(A)$ admit  smooth $U_n$-equivariant deformation retractions, the proof of Theorem \ref{mainthm0} combines with Remark \ref{smoothDeligneRmk3} to  give a zigzag of strict  quasi-isomorphisms
\[
K_{\Ban, >0}(A) \simeq   \hat{\CC}_{\bt}( A/\Cx)[-1]/\bigoplus_{p>0} \R(p)[1-2p].
\]
 By Remarks \ref{smoothDeligneRmk}, this applies in particular to the case $A=  \C^{\infty}(\R^d, \Cx)$.

As observed by Ulrich Bunke, the copies of $\R(p)$  in this context are best thought of as the real completion $(\mathbf{ku})_{\R}$ of the connective $K$-theory spectrum. If we instead started with a real  Fr\'echet algebra $A$ for which the inclusions  $O_n \subset \GL_n(A)$ admitted  smooth $O_n$-equivariant deformation retracts (such as $\C^{\infty}(\R^d, \R)$), similar arguments should give $K_{\Ban}(A)$ in terms of the homotopy fibre of a map from  $(\mathbf{ko})_{\R}$ to real cyclic homology.
\end{remark}

\begin{definition}
 For a  Fr\'echet $k$-algebra $A$, let $\hat{\cB}_{**}(A/k)$ be Connes' double complex \cite[\S 9.8]{W} associated to the cyclic object $A^{\hten_k(\bt+1) }$, and write $\hat{\cB}(A/k)$ for the total complex of $\hat{\cB}_{**}(A/k)$. 

Similarly, if $A$ is a commutative  Fr\'echet $k$-algebra, we write $\Omega^1_{A/k}$ for the module of K\"ahler differentials defined using the topological tensor product $\hten_k$, so  $\Omega^1_{A/k}:= I/\overline{(I\cdot I)}$ for $I= \ker(A\hten_kA \to A)$. This is equipped with a derivation $d\co A \to \Omega^1_{A/k}$ given by $da:=a\ten 1 -1\ten a$. We then set $\Omega^p_{A/k}:= \hat{\Lambda}^p_A(\Omega^1_{A/k})$, and let $\Omega^{\bt}_{A/k}$ be the complex $A \xra{d}\Omega^1_{A/k} \xra{d}\Omega^2_{A/k}\xra{d} \ldots$, with the Hodge filtration $F^p$ defined by brutal truncation.   
\end{definition}

\begin{lemma}\label{CCDRlemma}
If $A$ is a  Fr\'echet $k$-algebra (for $k\in \{\R,\Cx\}$), there exists a canonical zigzag
\[
 \hat{\CC}_{\bt}(A/k) \la \hat{\cB}(A/k) \to\bigoplus_{p\ge 0 }(\Omega^{2p-\bt}_{A/k}/F^{p+1})
\]
in $\Ch(\pro(\Ban))$ to a sum of truncated de Rham complexes. The first map is always a strict quasi-isomorphism, and the second is so whenever $A=\C^{\infty}(M,k)$ for a smooth manifold $M$, or $k=\Cx$ and  $A$ is the ring of complex analytic  functions on an open polydisc. 
\end{lemma}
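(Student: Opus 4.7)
\medskip

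The plan is to take $\hat{\cB}(A/k)$ to be the (normalised, completed) $(b,B)$-bicomplex of Connes--Loday, whose $n$th column is $A\hten \bar A^{\hten n}$ with vertical differential $b$ and horizontal differential $B$ (the Connes operator), truncated in non-negative columns. The first arrow $\hat{\cB}(A/k) \to \hat{\CC}_{\bt}(A/k)$ is modeled on the classical comparison of the $(b,B)$-bicomplex with Connes' complex $(A^{\hten \bt+1})/(1-t)$: one filters both sides by columns and uses the Connes exact sequence $0 \to A^{\hten n+1}/(1-t) \xra{N} A^{\hten n+1} \xra{1-t} A^{\hten n+1} \to A^{\hten n+1}/(1-t) \to 0$ together with the acyclicity of the $b'$-column to build a strict contracting homotopy for the mapping cone. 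Because all of these operations are continuous linear maps on projective tensor products, they descend to $\Ch(\pro(\Ban))$ to yield a strict quasi-isomorphism for any Fr\'echet algebra $A$, with no smoothness hypothesis.

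For the second arrow, I would use the continuous Hochschild--Kostant--Rosenberg antisymmetrisation
\[
 a_0 \ten a_1 \ten \ldots \ten a_n \longmapsto \tfrac{1}{n!}\, a_0\, da_1\wedge \ldots \wedge da_n \in \Omega^n_{A/k},
\]
which vanishes on degenerate tensors and intertwines $b$ with $0$ and $B$ with the de Rham differential $d$ (up to the usual combinatorial factors). This sends $\hat{\cB}(A/k)$ into the double complex with $\Omega^n_{A/k}$ in each row linked by $d$, and the standard indexing conventions identify the resulting total complex with $\bigoplus_{p>0}\Omega^{2p-\bt}_{A/k}/F^{p+1}$.

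To see that the HKR map is a strict quasi-isomorphism in the two stated cases, I would reduce by the column filtration to proving that the continuous Hochschild complex of $A$ is strictly quasi-isomorphic to $\Omega^{\bt}_{A/k}$. For $A=\C^{\infty}(\R^d,k)$ this is Connes' smooth HKR theorem, and for $A=\sO(D^d)$ one argues analogously: the key input is an explicit continuous simplicial contracting homotopy on the bar resolution of $A$ relative to $k$, given by the integration formulae already used in the proof of Proposition~\ref{contractibleprop} (in the holomorphic case, iterated integrals against the path $\gamma(t)=(z_1,\ldots,z_{m-1},tz_m)$; in the smooth case, the analogous $\C^{\infty}$ paths). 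Nuclearity of the Fr\'echet algebras involved ensures that the projective tensor products compute the correct derived functors and that these homotopies extend to the completion, giving strict (not merely weak) quasi-isomorphisms in $\Ch(\pro(\Ban))$. Once one knows the columnwise (Hochschild-level) identifications are strict quasi-isomorphisms, a spectral sequence / filtered strict quasi-isomorphism argument upgrades this to the $(b,B)$-bicomplex.

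The main obstacle is the last step: producing contracting homotopies for the continuous bar complex of $\sO(D^d)$ and of $\C^{\infty}(\R^d,k)$ that are compatible with the projective tensor product and witness \emph{strict} quasi-isomorphisms in $\pro(\Ban)$. In the holomorphic case one has to be careful that the iterated-integral contractions converge uniformly on compact sub-polydiscs and give pro-Banach maps with respect to the sup-norm system, which is where the explicit bounds of the form $\|\iota(\omega)\|_{\bar D^m(r)}\le \exp(rC_r)$ from Proposition~\ref{contractibleprop} are crucial.
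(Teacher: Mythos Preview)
Your treatment of the first arrow $\hat{\cB}(A/k)\to\hat{\CC}_{\bt}(A/k)$ matches the paper's: both invoke the standard algebraic comparison (Weibel, Lemma 9.6.10 and Proposition 9.8.3) and observe that the explicit contracting homotopies appearing there are continuous linear, hence give strict quasi-isomorphisms in $\Ch(\pro(\Ban))$.

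For the HKR step the paper takes a different and much shorter route. Rather than building Connes-style integration homotopies witnessing that the antisymmetriser is a homotopy equivalence on each Hochschild column, it interprets the completed Hochschild complex as a model for $A\hten^{\oL}_{A\hten_k A}A$ (derived tensor product in Meyer's bornological sense) and then computes that derived tensor product via a \emph{Koszul resolution}: the free graded-commutative $(A\hten_k A)$-algebra $B$ on generators $h_1,\ldots,h_d$ in degree $1$ with $dh_i=z_i\ten 1-1\ten z_i$. The map $B\to A$ is a strict quasi-isomorphism (Hadamard's Lemma in the $\C^{\infty}$ case, the obvious series splitting in the analytic case), $B$ is levelwise projective over $A\hten_k A$, and $A\hten_{A\hten_k A}B$ is visibly $\Omega^{\bt}_{A/k}$ with zero differential. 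This completely sidesteps the obstacle you single out: no convergence estimates for integral homotopies are needed, and the argument is uniform in the smooth and holomorphic cases.

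A small correction: the iterated-integral bounds from Proposition~\ref{contractibleprop} do not supply what you need here. Those integrals invert the Maurer--Cartan map $g\mapsto g^{-1}d_mg$ on $\GL_n(\sO(D^m))$, which is a different problem from producing a homotopy inverse to the HKR antisymmetriser on $A^{\hten n}$. Your instinct that nuclearity is the reason projective tensor products behave well is correct, and it is precisely what makes the Koszul comparison with the bar resolution go through in the paper's argument.
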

\begin{proof}[Proof (sketch)] 
Adapting \cite[Lemma 9.6.10 and Proposition 9.8.3]{W}, there is a strict quasi-isomorphism to the  complex $\hat{\CC}_{\bt}(A/k) $ from the total complex $\hat{\cB}(A/k)$. That the quasi-isomorphism is strict follows because the proofs provide explicit contracting homotopies.

The second morphism is now defined on the completed Hochschild complex of $A$ by $a_0\ten \ldots \ten a_n \to \frac{1}{n!}a_0da_1 \wedge \ldots \wedge da_n$. It suffices to show that this is a strict quasi-isomorphism when $A$ is $\C^{\infty}(M,k)$ or the ring of analytic  functions on an open polydisc. In the first case, this follows from the continuous HKR isomorphism of \cite[Theorem 3.3]{pflaumCtsHH}.  For the second case, observe that the Hochschild complex is a model for  $A\hten^{\oL}_{A\hten_k A}A$, where derived tensor products are taken in the sense of \cite{meyerDborn};  we now just construct a Koszul resolution of $A$. If co-ordinates are given by $z_1, \ldots z_d$, then consider the commutative dg Fr\'echet algebra $B$ over $A\hten_k A$ freely generated by $h_1 , \ldots h_d$ in degree $1$, with $dh_i= z_i \ten 1 - 1\ten z_i$. The map $B \to A$ is clearly a strict quasi-isomorphism (using Hadamard's Lemma in the $\C^{\infty}$ case), and $B$ is clearly a projective $A\hten_k A $-module. 
\end{proof}

\begin{remark}\label{smoothDeligneRmk5}
When  $A=  \C^{\infty}(M, \Cx)$ or the ring of complex analytic  functions on an open polydisc, Lemma \ref{CCDRlemma} combines with Remarks \ref{smoothDeligneRmk} and  \ref{smoothDeligneRmk4} to  give a zigzag of strict  quasi-isomorphisms
\[
K_{\Ban, >0}(A) \simeq  \bigoplus_{p>0 }\cone(\R(p) \to \Omega^{-\bt}_{A/\Cx}/F^{p+1})[1-2p]
\]
\end{remark}

\begin{definition}
For a complex manifold $X_{\an}$, define the sheaf $\sA^{\bt}_{X,\R}$ (resp. $\sA^{\bt}_{X,\Cx}$)  to be the de Rham complex of smooth real (resp. complex) forms on $X_{\an}$, regarded as an object of $\Ch(\pro(\Ban),X_{\an})$ via the natural Fr\'echet space structures on spaces of smooth forms. The complex  $\sA^{\bt}_{X,\Cx}$ comes equipped with a bigrading and hence a Hodge filtration $F$.

Following \cite{beilinson}, we write $\R_{X,\cD}(p)$ for the Deligne complex given by the cocone of the map
$\R(p) \to  \Omega^{\bt}_{X/\Cx}/F^p$. We also
write $\sA^{\bt}_{X,\R,\cD}(p)$ for the cocone of the map $\sA^{\bt}_{X,\R}(p) \to \sA^{\bt}_{X,\Cx}/F^p$.
\end{definition}

\begin{theorem}\label{mainthm}
 For a complex manifold $X_{\an}$, there exists a zigzag of local strict quasi-isomorphisms 
\[
 K_{\Ban,>0}(\sO_X^{\an}) \simeq \bigoplus_{p> 0} \R_{X,\cD}(p)^{2p-\bt}
\]
of presheaves of pro-Banach complexes on $X_{\an}$.
\end{theorem}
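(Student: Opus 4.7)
The plan is to combine Theorem \ref{mainthm0} with Lemma \ref{CCDRlemma} to obtain a polydisc-level identification of $K_{\Ban,>0}(\sO_X^{\an})$ with the real Deligne complex, and then apply Lemma \ref{localstrictlemma} to promote this to a local strict quasi-isomorphism on $X_{\an}$.

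For each polydisc $B \in \cB(X_{\an})$, Theorem \ref{mainthm0} provides a functorial zigzag of strict quasi-isomorphisms
\[
K_{\Ban,>0}(\sO_X^{\an}(B)) \simeq \hat{\CC}_{\bt}(\sO_X^{\an}(B)/\Cx)[-1]\big/\bigoplus_{p>0}\R(p)[1-2p].
\]
Since $\sO_X^{\an}(B)$ is the ring of complex-analytic functions on a polydisc, Lemma \ref{CCDRlemma} applies and gives a further strict quasi-isomorphism $\hat{\CC}_\bt(\sO_X^{\an}(B)/\Cx) \simeq \bigoplus_{p>0} \Omega^{2p-\bt}_{\sO_X^{\an}(B)/\Cx}/F^{p+1}$. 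Tracing the map $\R(p) \to \hat\CC_{2p-2}$ from Lemma \ref{LieCClemma} through this identification shows it corresponds to the natural inclusion $\R(p) \hookrightarrow \Omega^0$ into the relevant summand. Composing the zigzags yields the cone description already flagged in Remark \ref{smoothDeligneRmk5}:
\[
K_{\Ban,>0}(\sO_X^{\an}(B)) \simeq \bigoplus_{p>0} \cone\bigl(\R(p) \to \Omega^{2p-\bt}_{\sO_X^{\an}(B)/\Cx}/F^{p+1}\bigr)[1-2p].
\]

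Next I would identify the right-hand side with $\bigoplus_{p>0}\R_{X,\cD}(p)^{2p-\bt}$ on polydiscs. On a polydisc the inclusion of holomorphic into smooth forms is a filtered strict quasi-isomorphism $\Omega^\bt_{\sO_X^{\an}(B)/\Cx} \to \sA^\bt_{X,\Cx}(B)$ (Dolbeault--Grothendieck, since polydiscs are Stein), so we may replace $\Omega^\bt/F^{p+1}$ with $\sA^\bt_{X,\Cx}/F^{p+1}$. A direct comparison, using the standard cone-cocone relationship together with the defining triangle $\R_{X,\cD}(p) = \cocone(\sA^\bt_{X,\R}(p) \to \sA^\bt_{X,\Cx}/F^p)$, then identifies the shifted cone with $\R_{X,\cD}(p)^{2p-\bt}$. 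All intermediate presheaves and zigzag maps extend naturally to objects of $\Ch(\pro(\Ban),X_{\an})$ and are strict quasi-isomorphisms on every polydisc, so Lemma \ref{localstrictlemma} promotes the whole zigzag to a zigzag of local strict quasi-isomorphisms on $X_{\an}$.

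The hard part is the explicit identification in the previous paragraph: carefully matching the cone with the Deligne complex, including the shift $[1-2p]$ and the $F^{p+1}$-vs-$F^p$ distinction, requires bookkeeping through cone-cocone triangles and the short exact sequence $0 \to F^p/F^{p+1} \to \sA^\bt_{X,\Cx}/F^{p+1} \to \sA^\bt_{X,\Cx}/F^p \to 0$, whose middle term becomes strictly quasi-isomorphic to its right term on polydiscs once one uses Dolbeault acyclicity of $\sA^{p,\bt}$. Once this identification is nailed down, the passage from polydiscs to $X_{\an}$ via Lemma \ref{localstrictlemma} is a clean formal application.
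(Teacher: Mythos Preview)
Your overall strategy matches the paper's exactly: combine Theorem \ref{mainthm0} with Lemma \ref{CCDRlemma} to get a polydisc-level strict quasi-isomorphism, then invoke Lemma \ref{localstrictlemma}. The paper's proof is slightly more terse --- it simply asserts that Lemma \ref{CCDRlemma} yields a zigzag between $\bigoplus_{p>0}\R_{X,\cD}(p)$ and the cyclic quotient, and separately records that $\R_{X,\cD}(p)\to\sA^{\bt}_{X,\R,\cD}(p)$ is a local strict quasi-isomorphism via the Poincar\'e lemma --- but the content is the same.

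There is one genuine error in your ``hard part''. You propose to reconcile the $F^{p+1}$ appearing in Lemma \ref{CCDRlemma} and Remark \ref{smoothDeligneRmk5} with the $F^p$ in the Deligne complex by using the short exact sequence
\[
0 \to F^p/F^{p+1} \to \sA^{\bt}_{X,\Cx}/F^{p+1} \to \sA^{\bt}_{X,\Cx}/F^p \to 0
\]
and ``Dolbeault acyclicity of $\sA^{p,\bt}$'' to conclude that the middle arrow is a quasi-isomorphism on polydiscs. But $\sA^{p,\bt}$ is \emph{not} acyclic on a polydisc: the Dolbeault lemma says its cohomology is $\Omega^p$, concentrated in degree $0$. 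So $F^p/F^{p+1}$ contributes $\Omega^p[-p]$, and the two quotients are genuinely different.

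The actual resolution is pure reindexing. The sum in Lemma \ref{CCDRlemma} should run over $p\ge 0$ (one checks this against $\HC_0(A)=A$); with that correction, $\hat{\CC}_{\bt}(A/\Cx)\simeq\bigoplus_{i\ge 0}\Omega^{2i-\bt}/F^{i+1}$, and after the shift $[-1]$ and the substitution $p=i+1$ one obtains $\bigoplus_{p>0}\Omega^{2p-1-\bt}/F^p$, which is exactly what the introduction writes and what matches $\R_{X,\cD}(p)$. No Dolbeault argument is needed at this step. Once you replace your acyclicity claim with this index shift, the rest of your argument goes through and agrees with the paper.
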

\begin{proof}
  As we have not established a hypersheafification functor  for pro-Banach presheaves, we first wish to establish that the canonical map $ \R_{X,\cD}(p)\to \sA^{\bt}_{X,\R,\cD}(p)$ is a local strict quasi-isomorphism. This follows by Lemma \ref{localstrictlemma} because integration as in the Poincar\'e lemma provides contracting homotopies on forms on polydiscs. 

 Because $\bigoplus_{p> 0} \sA^{2p-\bt}_{X,\R,\cD}(p)$ is a pro-Banach hypersheaf, the proof of Lemma \ref{localstrictlemma}  gives
\begin{eqnarray*}
 \oR\HHom_{X_{\an}, \Ban}( K_{\Ban,>0}(\sO_X^{\an}), \bigoplus_{p> 0}\sA^{2p-\bt}_{X,\R,\cD}(p) )\\ \simeq \int^h_{U \in \cB(X_{\an})}\oR\HHom_{\Ban}(  K_{\Ban,>0}(\sO_X^{\an}(U)), \bigoplus_{p> 0} \sA^{2p-\bt}_{X,\R,\cD}(p)(U) ),
\end{eqnarray*}
so it suffices to construct a strict zigzag of  quasi-isomorphisms 
\[
 K_{\Ban,>0}(\sO_X^{\an})(U) \simeq \bigoplus_{p>0 } \sA^{2p-\bt}_{X,\R,\cD}(U)
\]
functorial in polydiscs $U$. Philosophically, this is saying that because the category $\cB(X_{\an})$ of open polydiscs is a base for the topology on $X_{\an}$, hypersheaves on $X_{\an}$ correspond to those on $\cB(X_{\an})$.

Lemma \ref{CCDRlemma}  gives a zigzag of local strict quasi-isomorphisms between $\bigoplus_{p> 0} \R_{X,\cD}(p)^{2p-\bt} $ and $ \hat{\CC}_{\bt}( \sO_X^{\an}(U)/\Cx)[-1]/\bigoplus_{p>0} \R(p)[1-2p]$, which combined with Theorem \ref{mainthm0} gives
\[
 K_{\Ban,>0}(\sO_X^{\an}(U)) \simeq \bigoplus_{p> 0} \R_{X,\cD}(p)^{2p-\bt}(U),
\]
as required.
\end{proof}

\begin{definition}\label{KBandef0}
Define the presheaf $K_{\Ban}(\sO_X^{\an})$ of pro-Banach complexes on $X^{\an}$ by 
\[
 K_{\Ban}(\sO_X^{\an}):= \R \oplus K_{\Ban,>0}(\sO_X^{\an}).
\]
\end{definition}

\begin{remark}
 This definition is justified by the observation that $K_0(\sO_X^{\an})$ is locally isomorphic to $\Z$. However, it seems unlikely that $K_{\Ban}(\sO_X^{\an} )$ can be defined in the same way as $K_{\Ban,>0}(\sO_X^{\an})$, because  $K_0(\C^{\infty}(Z,\sO_X^{\an}))$ is in general much larger than $K_0(\sO_X^{\an}) $. 
There is a possibility that for a careful choice of category replacing $\FrM$,  these $K_0$ groups might  have the same image under $\oL\ban$,  because the manifolds  $\GL_n(\sO_X(B))$ all admit $\C^{\infty}$-partitions of unity, being closed subsets of nuclear spaces.
\end{remark}

\begin{corollary}\label{globalKcor}
 For $X$ a smooth proper complex variety, the pro-Banach complex
\[
 K_{\Ban}(X):= \oR\Gamma(X, K_{\Ban}(\sO_X^{\an}))
\]
is strictly quasi-isomorphic to the real Deligne complex
\[
 \bigoplus_{p\ge 0} \oR\Gamma(X_{\an}, \R_{\cD,X}(p))^{2p-\bt} 
\]
equipped with the finest $\R$-linear
topology.
\end{corollary}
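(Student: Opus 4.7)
The plan is to reduce to Theorem \ref{mainthm} by taking derived global sections, handling the $p=0$ summand via the direct summand $\R$ appearing in Definition \ref{KBandef0}, and exploiting the compactness of $X$ to collapse the pro-Banach topology to the finest $\R$-linear one.

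First I would split $K_{\Ban}(\sO_X^{\an}) = \R \oplus K_{\Ban,>0}(\sO_X^{\an})$ and apply $\oR\Gamma(X_{\an},-)$ to obtain
\[
\oR\Gamma(X_{\an},K_{\Ban}(\sO_X^{\an})) \simeq \oR\Gamma(X_{\an},\R) \oplus \oR\Gamma(X_{\an},K_{\Ban,>0}(\sO_X^{\an})).
\]
The first summand matches the $p=0$ contribution on the right-hand side, since $\R_{\cD,X}(0)=\R$. For the second summand, Theorem \ref{mainthm} supplies a zigzag of local strict quasi-isomorphisms which its proof realises via the pro-Banach hypersheaves $\sA_{X,\R,\cD}^{2p-\bt}(p)$, so the task reduces to transporting this local equivalence to a global strict quasi-isomorphism.

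Passing from local to global is the principal technical hurdle, in view of the absence of a hypersheafification functor for $\Ch(\pro(\Ban),X_{\an})$. I would pick a finite open cover $\{U_i\}$ of the compact manifold $X$ by polydiscs whose non-empty finite intersections are again Stein-contractible, and compute $\oR\Gamma$ by the associated \v{C}ech-style totalisation; for a finite cover this lives inside $\Ch(\pro(\Ban))$. Adapting the bifunctor descent argument of Lemma \ref{bifunctordescent} and the criterion of Lemma \ref{localstrictlemma}, such a totalisation sends strict quasi-isomorphisms between pro-Banach hypersheaves to strict quasi-isomorphisms. Combined with the classical identification $\oR\Gamma(X_{\an},\sA_{X,\R,\cD}^{\bt}(p))\simeq\oR\Gamma(X_{\an},\R_{\cD,X}(p))$, this yields the required strict quasi-isomorphism.

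Finally, because $X$ is smooth proper, the groups $\H^i(X_{\an},\R_{\cD,X}(p))$ are finite-dimensional real vector spaces, by standard Hodge-theoretic finiteness applied to the constituents $\H^i(X,\R)$ and $\H^i(X,\Omega^{<p})$. Any pro-Banach structure on a finite-dimensional vector space collapses to the finest $\R$-linear topology, which is the unique Hausdorff topological vector space topology in finite dimensions, so the final complex carries precisely the claimed topology. The main obstacle I anticipate is making the \v{C}ech computation fully rigorous inside $\Ch(\pro(\Ban),X_{\an})$: one must verify that the strict quasi-isomorphisms of Theorem \ref{mainthm} assemble functorially along the restriction maps of a good polydisc cover, in the absence of a well-behaved hypersheafification on the target category.
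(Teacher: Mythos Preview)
Your overall strategy is sound up through the point where you obtain a model for $K_{\Ban}(X)$ as $\bigoplus_{p\ge 0}\Gamma(X_{\an},\sA^{2p-\bt}_{X,\R,\cD}(p))$; indeed the paper does essentially this, though more directly: since $\sA^{\bt}_{X,\R,\cD}(p)$ is already a pro-Banach hypersheaf (being a complex of soft sheaves of Fr\'echet spaces), ordinary global sections compute $\oR\Gamma$ and no \v{C}ech apparatus is needed.

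The genuine gap is your final paragraph. Knowing that the cohomology groups $\H^i(X_{\an},\R_{\cD,X}(p))$ are finite-dimensional does \emph{not} by itself imply that the pro-Banach complex computing them is strictly quasi-isomorphic to a complex carrying the finest $\R$-linear topology. Strict quasi-isomorphism (Definition~\ref{weakproban}) requires the differentials to have closed image in the pro-Banach sense, and a complex of infinite-dimensional Fr\'echet spaces with finite-dimensional algebraic cohomology need not satisfy this; your argument that ``any pro-Banach structure on a finite-dimensional vector space collapses'' applies to the cohomology objects, not to the complex. What is missing is an explicit finite-dimensional subcomplex (or quotient) together with a strict quasi-isomorphism.

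The paper supplies exactly this missing ingredient via Hodge theory: on a smooth proper $X$, the inclusion of harmonic forms $\cH^*(X,\R)\hookrightarrow\Gamma(X_{\an},\sA^{\bt}_{X,\R})$ is a \emph{strict} quasi-isomorphism, because the Hodge decomposition is a continuous splitting of Fr\'echet spaces (the Green's operator is bounded). This yields a strict quasi-isomorphism
\[
\cocone\bigl(\cH^*(X,\R)(p)\to\cH^*(X,\Cx)/F^p\bigr)\;\longrightarrow\;\Gamma(X_{\an},\sA^{\bt}_{X,\R,\cD}(p)),
\]
and the source, being finite-dimensional, automatically carries the finest topology. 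This is the step you would need to add; the finiteness of cohomology alone is not enough.
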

\begin{proof}
 From the proof of Theorem  \ref{mainthm}, we have  local strict quasi-isomorphisms $\R_{X,\cD}(p) \to \sA_{\R,X,\cD}(p) $. Since the latter complexes are hypersheaves, a model for $K_{\Ban}(X)$ is given by $\bigoplus_{p\ge 0}\Gamma(X_{\an}, \sA_{\R,\cD,X}(p))^{2p-\bt} $.

The Hodge decomposition of \cite[pp. 94--96]{GriffithsHarris} ensures that inclusion of harmonic forms $\cH^*(X,\R)$ in $\Gamma(X_{\an}, \sA_{X,\R}^{\bt})$ is a strict quasi-isomorphism, giving a strict quasi-isomorphism
\[
 \cocone(\cH^*(X,\R)(p) \to \cH^*(X,\Cx)/F^p)\to \Gamma(X_{\an}, \sA_{\R,\cD,X}(p)).
\]
 Since the first complex is finite-dimensional, it has the finest $\R$-linear topology. It is therefore strictly quasi-isomorphic to $\Gamma(X_{\an}, \sA_{\R,\cD,X}(p)) $ equipped with the finest $\R$-linear topology.
\end{proof}

Beware that Corollary \ref{globalKcor} does not extend to quasi-projective varieties, because their Deligne--Beilinson cohomology is defined using log differential forms.

Observe that for Zariski opens $U \subset X$, the maps $\sO_X(U) \to \sO_X^{\an}(U)$ induce maps $K(\sO_X) \to K_{\Ban}(\sO_X^{\an})$ and hence $K(X) \to K_{\Ban}(X)$. On the level of function complexes with coefficients in $V$, these maps are given by the primitive parts of $\CC^{\bt}_{\dif}(\GL(\sO_X^{\an}(U)),V) \to  \CC^{\bt}(\GL(\sO_X(U)),V)$. From this it follows that the composition of the map  $K(X) \to K_{\Ban}(X)$ with the quasi-isomorphism of Corollary \ref{globalKcor} is just  Beilinson's regulator.

\subsection{Non-connective deloopings}

The pro-Banach complex $K_{\Ban}(X)$ of Corollary \ref{globalKcor} is a rather unsatisfactory hybrid. The presheaf $K_{>0}(\sO_X^{\an})$ is defined by completing $1$-connective $K$-theory, yet $K_{\Ban}(X)$ is non-connective, because we are enforcing descent. One way to resolve this is just to consider the connective part $\tau_{\ge 0} K_{\Ban}(X)$, which applied to Corollary \ref{globalKcor}   gives the absolute Hodge complex, as in \cite{beilinson}. 

An alternative approach would have been to apply the functor $\oL\ban$ to non-connective $K$-theory $\bK$.  As in \cite{ThomasonTrobaugh}, the Bass delooping of $K(X)$ is given by Zariski descent as the cone of $K(X) \to \oR\Gamma_{\Zar}( \bP^1_X, K(\sO))$, and iterating this construction gives non-connective $K$-theory $\bK(X)$.
One could attempt to use this to calculate $\oL\ban \bK$ in the same way we calculated $\oL\ban K_{>0}$. However, this would necessitate an understanding of the topology of $\GL(\sO_X^{\an}[t]) $ and $ \GL(\sO_X^{\an}[t,t^{-1}])$, with suitable symmetric spaces calculating differentiable cohomology.

An alternative and apparently more natural choice of delooping for $K_{\Ban}(X)$ is simply to take the cone of $K_{\Ban}(X) \to K_{\Ban}(\bP^1_X)$, iterating the construction to give a complex $\bK_{\Ban}(X)$.  Since 
\[
\oR\Gamma(\bP^1_X, \R)^{\bt} \simeq \oR\Gamma(X, \R)^{\bt}\oplus \oR\Gamma(X, \R)^{\bt-2}(-1)
\]
as real Hodge complexes, 
our delooping of $K_{\Ban}(X)$ just corresponds under Corollary \ref{globalKcor} to
\[
 \bigoplus_{p\ge -1} \oR\Gamma(X_{\an}, \R_{\cD,X}(p))^{2p+1-\bt},
\]
and we get
\[
 \bK_{\Ban}(X) \simeq \bigoplus_{p\in \Z} \oR\Gamma(X_{\an}, \R_{\cD,X}(p))^{2p-\bt}.
\]
It is reassuring to observe that applying this procedure to  $K_{\Ban,>0}(\sO_X^{\an})$ recovers the same answer, avoiding the somewhat arbitrary choice in  Definition \ref{KBandef0}.

\bibliographystyle{alphanum}
\bibliography{references.bib}
\end{document}